\documentclass[12pt]{article}
\usepackage{amsfonts,mathrsfs,amscd,amssymb,amsthm,amsmath,bm,graphicx,psfrag,amsrefs}
\usepackage[ruled]{algorithm2e}
\usepackage[noend]{algpseudocode}
\usepackage{authblk,hyperref}

\large\normalsize

\newtheorem{thm}{Theorem}[section]
\newtheorem{prop}[thm]{Proposition}

\newtheorem{defn}[thm]{Definition}

\newcommand{\C}{\mathbb{C}}

\newcommand{\Z}{\mathbb{Z}}
\newcommand{\R}{\mathbb{R}}
\newcommand{\Q}{\mathbb{Q}}
\newcommand{\lcm}{\mathrm{lcm}}
\newcommand{\aff}{\mathrm{aff}}
\newcommand{\Col}{\operatorname*{Col}}
\newcommand{\vol}{\operatorname*{vol}}

\usepackage{hyperref}
\hypersetup{colorlinks,%
citecolor=red,%
linkcolor=blue,%
}
\usepackage{color}

\begin{document}

\title{Ehrhart quasi-polynomials of rational polytopes by real dilations\thanks{Research was supported by RGC Grant 16308821.}}

\author{Ying Cao and Beifang Chen\thanks{Department of Mathematics, Hong Kong University of Science and Technology, Clear Water Bay, Hong Kong, \href{mailto:ycaobf@connect.ust.hk}{ycaobf@connect.ust.hk}, \href{mailto:mabfchen@ust.hk}{mabfchen@ust.hk}}}


\date{}
\maketitle

\begin{abstract}
This paper is to study the Ehrhart function $L(P,t)$ of a rational $n$-polytope $P$, defined as the number of lattice points of dilated polytopes $tP$ with real numbers $t\geq 0$. It turns out that $L(P,t)$ is a quasi-polynomial of real variable $t$ in the sense that 
\[
L(P,t)=\sum_{k=0}^{n} c_k(P,t)t^k, \quad t\geq 0,
\]
where $c_k(P,t)$ are periodic piecewise polynomials of degree $n-k$ if $\aff P$ contains the origin, and are periodic functions vanishing almost everywhere otherwise. When $P$ is a rational simplex $\sigma$, the coefficient functions $c_k(\sigma,t)$ are given explicitly in terms of vertex information of the simplex $\sigma$. Moreover, the reciprocity law still holds.
\end{abstract}

\textbf{Keywords:} Ehrhart function, real dilation, quasi-polynomial, reciprocity law

\section{Introduction}

It is well-known that for a lattice polytope $P$ the number of lattice points of dilated polytopes $nP:=\{n\alpha:\alpha\in P\}$ by integers $n\geq 0$, denoted $L(P,n)$, is a polynomial function of degree $\dim P$ in terms of $n$, known as the {\em Ehrhart polynomial} of $P$, due to the work of Ehrhart~\cite{ehrhart1962polyedres,ehrhart1967probleme,ehrhart1967problemei}. Since Ehrhart's seminal work, Ehrhart theory has attracted significant attention \cite{barvinok1999algorithmic,cappell1994genera,brion1997lattice,diaz1997ehrhart,beck2007computing} and developed into a rich theory with diverse applications. 

For a rational polytope $P$, the counting function $L(P,n)$ is no longer in polynomial pattern but a quasi-polynomial. Recall that a {\em quasi-polynomial of degree $m$} is defined by Stanley \cite[p.\,210]{stanley2011enumerative} as a function $f:{\Bbb Z}\rightarrow{\Bbb C}$ (or $f:\Z_{\geq 0}\to\C$) such that 
\begin{align}\label{definition of quasi-polynomial}
f(n)=\sum_{k=0}^m c_k(n)n^k, 
\end{align}
where $c_m(n)\not\equiv 0$, $n^0\equiv 1$, and each $c_k:{\Bbb Z}\rightarrow{\Bbb C}$ (or $c_k:\Z_{\geq 0}\to\C$) is a periodic function with integer period. It is easy to see that the quasi-polynomial pattern \eqref{definition of quasi-polynomial} is unique by considering the restriction to each residue class modulo a common period of the coefficient periodic functions $c_k(n)$. See also Barvinok \cite{barvinok2006computing} for linear identities for quasi-polynomials, which implies the uniqueness of the quasi-polynomial pattern \eqref{definition of quasi-polynomial}.


\begin{thm}[Ehrhart~\cite{ehrhart1962polyedres} and McMullen~\cite{mcmullen1978lattice}]
For each $P$ rational polytope of ${\Bbb R}^N$, the counting function $L(P,n):=\#(nP\cap{\Bbb Z}^N)$ has the following pattern 
\begin{equation}\label{eq:Nonnegative-count}
L(P,n)
=\sum_{k=0}^{\dim P} c_k(P,n)n^k, \quad n\in\Z_{\geq 0},
\end{equation}
where each $c_k(P,\cdot):{\Bbb Z}_{\geq 0}\rightarrow{\Bbb Q}$ is a periodic function, with period $s_k$ as the smallest positive integer such that the affine space $s_k\aff F$ contains lattice points for all $k$-dimensional faces $F$ of $P$.
\end{thm}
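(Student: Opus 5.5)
The plan is to reduce to simplices and then use generating functions, arguing on the cone over the polytope. Triangulate $P$ without new vertices, so every face of every simplex is a relatively open face of a simplex whose vertices are vertices of $P$. Writing $P$ as the disjoint union of the relatively open simplices $\sigma^\circ$ of the triangulation gives $L(P,n)=\sum L(\sigma^\circ,n)$. Each simplex face of dimension $k$ lies in some face $F$ of $P$ with $\dim F\ge k$. So it suffices to prove the following claim for a relatively open rational simplex $\sigma^\circ$ of dimension $d$ with vertices $v_0,\dots,v_d$: $L(\sigma^\circ,n)=\sum_k c_k n^k$, where $c_k$ has period dividing the smallest $s$ such that $s\aff G$ contains lattice points for every $k$-face $G$ of $\sigma$.

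Care is needed here: the claimed periods are attached to faces of $P$, not of the triangulation. In the final bookkeeping I will use that the $k$-faces of a simplex sitting inside a face $F$ of $P$ have affine hulls contained in $\aff F$. That is the wrong direction for divisibility, so instead I will prove the coefficient statement via McMullen's approach, intrinsically.

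The intrinsic route is McMullen's valuation argument. Fix the common denominator $D$ so that $DP$ is a lattice polytope. For each residue $r$ mod $D$, $n\mapsto L(P,r+Dm)$ is a polynomial in $m$ of degree $\dim P$; this is Ehrhart's theorem for lattice polytopes applied to the translate/dilate $rP+mDP$ via mixed-valuation polynomiality (McMullen's theorem that $\#((rP+mQ)\cap\Z^N)$ is polynomial in $m$ for lattice $Q$). This yields the quasi-polynomial form with period $D$ and rational coefficients.

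To sharpen the period of $c_k$ to $s_k$, I use McMullen's lemma. For a lattice translate-invariant valuation $\varphi$ and a rational polytope $P$, write $\varphi(nP)=\sum_k \varphi_k(P,n)n^k$. Then the function $n\mapsto\varphi_k(P,n)$ depends only on the classes $nF$ modulo lattice translations, for $k$-faces $F$ in a suitable sense. The concrete step is an induction on $\dim P$ through the Euler–Maclaurin/Brion-type decomposition: express $L(P,n)$ via Brion's identity as a sum over vertex tangent cones, and group the terms by faces. The degree-$k$ part arises from faces $F$ of dimension $\ge k$. Its dependence on $n$ enters only through the lattice point sets of the shifted affine spaces $n\aff F$. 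When $n$ increases by $s_k$, the space $s_k\aff F$ contains a lattice point, so $n\aff F$ and $(n+s_k)\aff F$ differ by a lattice translation after adjusting by the linear part. Hence the relevant counting data are unchanged and $c_k(P,n+s_k)=c_k(P,n)$.

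The main obstacle is this last step: isolating the $n^k$ coefficient and showing it only sees faces of dimension $\ge k$ together with their translation class mod $\Z^N$. I would first try the standard trick of writing $c_k(P,n)=\sum_{\dim F\ge k}$ (a local contribution of $F$ depending on $n\aff F$ mod $\Z^N$). For a face of dimension $j>k$, I note that the period $s_j$ divides $s_k$, since $s_k\aff G$ containing a lattice point for all $k$-faces $G\subset F$ forces the same for $F\supset G$. This divisibility makes all contributions periodic with period $s_k$.
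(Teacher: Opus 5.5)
Your preliminary steps are sound. You are right to abandon the triangulation: a $k$-face of the triangulation lying inside a higher-dimensional face of $P$ can have a worse denominator than every $k$-face of $P$, so summing simplex quasi-polynomials only produces a common period. That is also all the paper's own determined-set method gives; the paper simply quotes this theorem from Ehrhart and McMullen and does not prove it. Polynomiality of $m\mapsto L(P,r+Dm)$ gives the quasi-polynomial form with period $D$; the degree is at most $\dim P$, and it can be the zero polynomial on some residue classes when $\aff P$ misses the lattice. Your divisibility remark $s_j\mid s_k$ is correct once you note that $\{s\in\Z: s\aff F\cap\Z^N\neq\emptyset\}\cup\{0\}$ is a subgroup of $\Z$.

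The genuine gap is in the one step that proves the refined period. You assert that $c_k(P,n)$ is a sum over faces $F$ with $\dim F\ge k$ of terms depending on $n$ only through $n\aff F$ modulo $\Z^N$. Your ``McMullen's lemma'' is a restatement of the theorem itself, so invoking it is circular. Brion's identity does not supply it either. Brion expresses the count through vertex tangent cones $nv+K_v$, whose contributions depend on the apex $nv$ modulo $\Z^N$, and this yields only the period $s_0$ (the vertex denominator $D$) for every coefficient. Reaching $s_k$ requires cancellation among the vertex terms, and ``grouping the terms by faces'' is not a formal consequence of Brion.

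One way to close the gap is a local Euler--Maclaurin formula $L(P,n)=\sum_F\mu\bigl(t(nP,nF)\bigr)\,n^{\dim F}\vol(F)$, with existence due to McMullen and an explicit construction by Berline--Vergne. Here $\mu$ of the transverse cone is invariant under lattice translations. The apex of $t(nP,nF)$ is the image of $n\aff F$ in $\R^N/\mathrm{lin}\,F$, and it moves by a lattice vector when $n$ increases by any $s$ with $s\aff F\cap\Z^N\neq\emptyset$. Hence $c_k(P,n)=\sum_{\dim F=k}\mu\bigl(t(nP,nF)\bigr)\vol(F)$ has period dividing $s_k$. Alternatively, you can reproduce McMullen's original valuation argument. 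As written, your proposal proves only period $D$.
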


Let $P$ be a rational polytope of dimension $d$ and denote by $\mathring{P}:=P\smallsetminus\partial P$ its relative interior. For negative integers $n$, let $L(P,n)$ be defined as 
\begin{align}\label{def negative value of integer dilation}
L(P,n):=\sum_{k=0}^{\dim P} c_k\left(P,n-\left\lfloor\frac{n}{s_k}\right\rfloor s_k\right)\cdot n^k,\quad n\in\Z_{<0}.
\end{align}
Then the function $L(P,n)$ of integer variable $n\in\Z$ is a quasi-polynomial, known as the {\em Ehrhart quasi-polynomial} of the rational polytope $P$.

The quasi-polynomial is defined either on $\Z_{\geq 0}$ or on $\Z$, and the two are clearly equivalent. Notice that the counting function $\#(n\mathring{P}\cap\Z^N)$ for integers $n\geq 0$ is not necessarily a quasi-polynomial since $\#(0\mathring{P}\cap\Z^N)=1$. However, the function $L(\mathring{P},n)$ defined on $\Z_{\geq 0}$, given by
\[
L(\mathring{P},n):=\begin{cases}
\#(n\mathring{P}\cap\Z^N), & n>0,\\
(-1)^{\dim P}, & n=0,
\end{cases}
\]
is indeed a quasi-polynomial, which is related to the quasi-polynomial $L(P,n)$ via the reciprocity law $L(\mathring{P},n)=(-1)^{\dim P}L(P,-n)$ for $n\in\Z$. We see that both $L(P,n)$ and $L(\mathring{P},n)$ need to be defined on all integers. For the sake of clear statement on the reciprocity law, it is best to define quasi-polynomials on $\Z$ rather than on $\Z_{\geq 0}$ at the very beginning.

Linke~\cite{linke2011rational}, Stapleton~\cite{stapledon2017counting} and Beck et al.~\cite{beck2021rational} studied rational Ehrhart quasi-polynomials of rational polytopes by rational dilations. Linke~\cite{linke2011rational} mentioned that rational dilations can be extended to real dilations by taking limits, which can be done in principle; while Beck et al.~\cite{beck2021rational} also mentioned the real Ehrhart function $L(P,t)$ for rational polytopes $P$ with codenominator $r$. Baldoni et al.~\cite{baldoni2013intermediate} provided an algorithm to compute the real Ehrhart quasi-polynomials in the form of explicit step-polynomials. 

In this paper, we study real dilations directly without bothering the extension from the case of rational dilations to real dilations. We delve into the simplices $\sigma$ decomposed from $P$ and their interiors $\mathring{\sigma}$ to obtain concrete patterns directly of the periodic function $c_k(\sigma,t)$ for real numbers $t\geq 0$, the periodic function $c_k(\mathring{\sigma},t)$ for real numbers $t>0$, and $c_k(P,t)$ by summing over the given decomposition.

\begin{defn}
A {\em real quasi-polynomial of degree $m$} is a function $f:\R\to\C$ such that
\begin{align}\label{real quasi-polynomial}
f(t)=\sum_{k=0}^m c_k(t)t^k,
\end{align}
where each $c_k:\R\to\C$ is a periodic function and adopt the convention $t^0\equiv 1$ for all $t\in\R$.
\end{defn} 


Let $P\subset\R^N$ be a rational polytope, and denote by $d:=d(P)$ the smallest positive real number such that $dP$ is an integral polytope, called  the {\em denominator} of $P$. The key objects are the counting functions
\begin{align}
L(P,t)&:=\#(tP\cap\Z^N),\quad t\ge 0,\\
L(\mathring{P},t)&:=\#(t\mathring{P}\cap\Z^N),\quad t>0.
\end{align}
The following theorem is the main result of this paper, it captures the quasi-polynomial pattern of $L(P,t)$ and $L(\mathring{P},t)$.

\begin{thm}  \label{main1}
Let $P\subset\R^N$ be a rational polytope with denominator $d$. Then the counting functions $L(P,t)$ and $L(\mathring{P},t)$ have the quasi-polynomial pattern:
\begin{align}
L(P,t)&=\sum_{k=0}^{\dim P} c_k(P, t)\, t^k,\quad t\ge 0 \label{real close quasi}\\
L(\mathring{P},t)&=\sum_{k=0}^{\dim P} c_k(\mathring{P}, t)\,t^k,\quad t>0 \label{real open quasi}
\end{align}
where $c_k(P, t)$ for $t\ge 0$ and $c_k(\mathring{P}, t)$ for $t>0$ are periodic functions of period $d$. And $L(P,t)$ and $L(\mathring{P},t)$ extend by \eqref{real close quasi} and \eqref{real open quasi} to quasi-polynomials of real variable $t$ as follows: 
\begin{itemize}
\item For $t<0$, where $c_k(P,t):=c_k(P,t+md)$ with integers $m$ such that $t+md\ge 0$;
\item For $t\le 0$, where $c_k(\mathring{P},t):=c_k(\mathring{P},t+md)$ with integers $m$ such that $t+md>0$.
\end{itemize}
\end{thm}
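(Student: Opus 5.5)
We plan to reduce to integer dilations of a rescaled polytope by a substitution that separates $t$ into an integer part and a fractional part. Write $t = md + s$ with $m=\lfloor t/d\rfloor\in\Z_{\ge 0}$ and $s\in[0,d)$. Then $tP = m(dP) + sP$. Since $dP$ is an integral polytope, the key step is to show that, for each fixed $s\in[0,d)$, the function $m\mapsto \#\bigl((m\,dP + sP)\cap\Z^N\bigr)$ is a polynomial in $m$ of degree at most $\dim P$. With such a polynomial $p_s(m)=\sum_j a_j(s)m^j$ in hand, we substitute $m=(t-s)/d$ and expand. This gives $L(P,t)=\sum_k c_k(P,t)t^k$, where
\[
c_k(P,t)=\sum_{j\ge k} a_j(s)\binom{j}{k}d^{-j}(-s)^{j-k},\qquad s=t-\lfloor t/d\rfloor d .
\]
Each such coefficient depends only on $s$, so it is automatically periodic of period $d$. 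The same argument, applied to $\mathring{P}$ with $s\in(0,d]$ and $m\ge 0$ (so that $t>0$), handles $L(\mathring{P},t)$.

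For the polynomiality in $m$, we will use the mixed-valuation/Minkowski-sum polynomiality. The map $(m_1,m_2)\mapsto \#\bigl((m_1Q+m_2R)\cap\Z^N\bigr)$ is not needed in full generality, however. We only need that for a lattice polytope $Q=dP$ and a fixed rational polytope $R=sP$ homothetic to it, $\#((mQ+R)\cap\Z^N)$ is polynomial in $m\in\Z_{\ge0}$.

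The cleanest route is to triangulate $P$ into rational simplices, which is consistent with the paper's plan of decomposing into simplices $\sigma$ and their relative interiors. We then write $L(P,t)$ as a sum of $L(\mathring{\sigma},t)$ over the faces of the triangulation. This reduces everything to an open rational simplex $\mathring{\sigma}$ with vertices $v_0,\dots,v_r$, where $dv_i\in\Z^N$. Lattice points of $t\mathring{\sigma}$ lie in the cone over $\{1\}\times\sigma$ at height $t$. Writing a point as $\sum \lambda_i (d v_i)$ with $\lambda_i>0$ and $\sum\lambda_i = t/d$, we split each $\lambda_i$ into an integer part plus a remainder in $(0,1]$. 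The remainders range over a finite set of points of the fundamental parallelepiped, together with the fixed level condition. For each residue point with remainder-sum $u$, where $u\equiv t/d \pmod 1$, the number of integer tuples $n_i\ge 0$ with $\sum n_i = t/d-u$ is the binomial coefficient $\binom{t/d-u+r}{r}$. This binomial coefficient is a polynomial in $t$ of degree $r$ whose coefficients depend on $u$, hence only on $t$ mod $d$. This gives the pattern directly, with the period $d$ of the original $P$ because every $v_i$ satisfies $dv_i\in\Z^N$.

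The main obstacle is bookkeeping when $\aff P$ does not contain the origin. In that case, for most $t$ the affine space $t\aff\sigma$ contains no lattice point, so the count vanishes for almost every residue. The degree bound $\dim P$ still holds, but one must check that the parallelepiped points respect the affine constraint. We will handle this by working in the linear span of $\{1\}\times\sigma$ intersected with $\Z^{N+1}$, which makes the finite set of residues a subset of a lattice coset structure.

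Finally, the extension to $t<0$ (respectively $t\le 0$) is purely definitional. Since each $c_k$ is $d$-periodic on $[0,\infty)$ (respectively $(0,\infty)$), setting $c_k(t):=c_k(t+md)$ is well defined, independent of the choice of $m$. The resulting $c_k$ are periodic on all of $\R$, so $\sum_k c_k(t)t^k$ is a real quasi-polynomial in the sense of the definition.
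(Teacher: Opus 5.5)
Your proposal is correct and follows essentially the paper's route: triangulating $P$ into relatively open simplices, splitting the barycentric coordinates of a lattice point of $t\mathring{\sigma}$ into integer parts plus remainders, and counting the integer parts by $\binom{t/d-u+r}{r}$ is exactly Proposition~\ref{Prop:Chen} (up to rescaling the remainders by $d$), and the subsequent expansion in powers of $t$, yours via $m=(t-s)/d$ and the paper's via elementary symmetric polynomials, is the same bookkeeping. One caution: your ``main obstacle'' is not an obstacle, since the same count works verbatim for simplices whose affine hull misses the origin (the determined sets are then empty at all but finitely many levels), and you should drop the device of intersecting with $\Z^{N+1}$, because it only sees integer heights $t$ and so would lose the real dilations.
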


By definition a real quasi-polynomial $f(t)$ takes $t\in\R$, and the periodic coefficient functions $c_k(P,t)$ and $c_k(\mathring{P},t)$ (period $d$) are likewise defined for every real argument. Equations \eqref{real close quasi} and \eqref{real open quasi} therefore describe $L(P,t)$ and $L(\mathring{P},t)$ on all of $\R$; however, the lattice-point interpretation applies only when $t\ge0$ for $P$ and $t>0$ for $\mathring{P}$, while negative values merely evaluate the quasi-polynomials. For $L(\mathring{P},t)$ at $t=0$, the value $L(\mathring{P},0)$ is sometimes overlooked by some authors and it must be given by
\[
L(\mathring{P},0):=c_0(\mathring{P},0)=\sum_{k=0}^{\dim P}c_k(\mathring{P}, 0)0^k.
\]

Theorem \ref{main1} implies the case of integer dilations studied by Ehrhart~\cite{ehrhart1962polyedres} and McMullen~\cite{mcmullen1978lattice}, and the case of rational dilations studied by Linke~\cite{linke2011rational}. Furthermore, for a rational simplex $\sigma$, we introduce functions $\mathbf{s}_k(\sigma;\cdot),\bar{\mathbf{s}}_k(\sigma;\cdot):\R^{n+1}\to\R$ as follows 
\begin{align}
\mathbf{s}_k(\sigma;x_0,\dots,x_n)&=\#D(\sigma,-dx_0)\cdot s_k(x_1,\dots,x_n)\label{eq:multivariable functions for closed simplex}\\
\bar{\mathbf{s}}_k(\sigma;y_0,\dots,y_n)&=\#\bar{D}(\sigma,-dy_0)\cdot s_k(y_1,\dots,y_n)\label{eq:multivariable functions for open simplex}
\end{align}
where $s_{n-k}$ are elementary symmetric polynomial of $n$ variables, $D(\sigma,\ell)$ is the set of lattice points of $\ell\sigma\cap \sum_{i=0}^n[0,d\alpha_i)$, $\bar{D}(\sigma,\ell)$ is the set of lattice points of $\ell\mathring{\sigma}\cap \sum_{i=0}^n(0,d\alpha_i]$. 

With the help of $\mathbf{s}_\sigma$ and $\bar{\mathbf{s}}_\sigma$, we obtain concrete formulas for the coefficients $c_k(\sigma,t)$ and $c_k(\mathring{\sigma},t)$:
\begin{align}
c_k(\sigma,t)&=\frac{1}{n!d^k}\sum_{i=0}^{n}\mathbf{s}_{n-k}(\sigma;x_0(i,t),\dots,x_n(i,t))\label{concrete coefficient of closed simplex}\\
c_k(\mathring{\sigma},t)&=\frac{1}{n!d^k}\sum_{j=1}^{n+1}\bar{\mathbf{s}}_{n-k}(\sigma;y_0(j,t),\dots,y_n(j,t))\label{concrete coefficient of open simplex}
\end{align}
where the variable functions $x_l$ and $y_l$ ($0\leq l\leq n$) in \eqref{concrete coefficient of closed simplex} and \eqref{concrete coefficient of open simplex} are given by
\begin{align}
x_l(i,t)&=l-i-\left\{\mbox{$\frac{t}{d}$}\right\}, \quad 0\leq i\leq n,\ t\in\R,\label{eq:variable functions for closed simplex}\\
y_l(j,t)&=l-j-\left\langle\mbox{$\frac{t}{d}$}\right\rangle, \quad 1\leq j\leq n+1,\ t\in\R.\label{eq:variable functions for open simplex}
\end{align}
The real Ehrhart quasi-polynomials $L(P,t)$ and $L(\mathring{P},t)$ of a rational polytope $P$ can be computed from these formulas by decomposing $P$ into rational (relatively open) simplices. 


The formulas \eqref{real close quasi} and \eqref{real open quasi} for counting the number of lattice points in real dilations of rational polytopes preserve the nice reciprocity law of Ehrhart (quasi-)polynomials in integer dilations as follows.



\begin{thm}[Reciprocity law for rational simplices] \label{reciprocity law for Ehrhart function}
For $\sigma$ each rational simplex of $\R^N$ with denominator $d$, the real Ehrhart quasi-polynomials $L(\sigma,t)$ and $L(\mathring{\sigma},t)$ satisfy the functional equation 
\begin{equation}
L(\sigma^{\circ},-t)=(-1)^{\dim\sigma}L(\sigma,t),\quad t\in\R.
\end{equation}
\end{thm}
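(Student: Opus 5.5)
We will prove the reciprocity law by reducing the real-dilation statement to the classical reciprocity for rational dilations, using the explicit coefficient formulas \eqref{concrete coefficient of closed simplex} and \eqref{concrete coefficient of open simplex} only to control how the periodic coefficients behave under $t\mapsto -t$. Both sides of the identity are real quasi-polynomials in $t$ with coefficients of period $d$, by Theorem~\ref{main1}. So it suffices to show that, for each fixed residue $r=\{t/d\}$, the coefficients match: $c_k(\mathring\sigma,-t)(-1)^k=(-1)^n c_k(\sigma,t)$ for every $k$, where $n=\dim\sigma$.

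\textbf{Step 1.} Since $\{-t/d\}=1-\{t/d\}$ when $t/d\notin\Z$, we compare the arguments of the two formulas. We have $x_l(i,t)=l-i-\{t/d\}$, while at $-t$ the open formula uses $y_l(j,-t)=l-j-\langle -t/d\rangle$. With the convention $\langle u\rangle\in(0,1]$, this is $\langle -t/d\rangle=1-\{t/d\}$. Hence
\[
y_l(j,-t)=l-j-1+\{t/d\}=-\bigl(x_{l}(i,t)\bigr)+(\text{shift}),
\]
and after reindexing $j=n+1-i$ and $l\mapsto n-l$ we get $y_{n-l}(n+1-i,-t)=-x_l(i,t)$. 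The elementary symmetric polynomials then give $s_{n-k}(-x_1,\dots,-x_n)=(-1)^{n-k}s_{n-k}(x)$ under the induced permutation of the variables $x_1,\dots,x_n$. The variable $x_0$ must be handled separately, since it enters only through the lattice-point count factor.

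\textbf{Step 2.} This is the main obstacle: matching the counting factors. We need $\#\bar D(\sigma,-d\,y_0)=\#D(\sigma,-d\,x_0)$ under the correspondence $y_0=-x_n$ (or the appropriate reindexed variable). We plan to prove it with the classical fundamental-parallelepiped involution. In the cone over $\sigma$ with generators $d\alpha_i$ (lifted to height $d$), the map $p\mapsto \sum_i d\alpha_i-p$ is a bijection. It sends lattice points of the half-open parallelepiped $\sum[0,d\alpha_i)$ at level $\ell$ to lattice points of $\sum(0,d\alpha_i]$ at level $(n+1)d-\ell$, and it exchanges the closed simplex slice with the open one. We then check that the levels $-dx_0(i,t)$ and $-dy_0(j,-t)$ correspond under $\ell\mapsto(n+1)d-\ell$ after the reindexing of Step 1; this is a bookkeeping verification using $x_0(i,t)=-i-\{t/d\}$.

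\textbf{Step 3.} Combining the two steps, each summand of $c_k(\mathring\sigma,-t)$ equals $(-1)^{n-k}$ times a summand of $c_k(\sigma,t)$, so $c_k(\mathring\sigma,-t)=(-1)^{n-k}c_k(\sigma,t)$. Multiplying by $(-t)^k=(-1)^kt^k$ and summing over $k$ yields
\[
L(\mathring\sigma,-t)=(-1)^nL(\sigma,t).
\]
Finally we treat the exceptional residues $t/d\in\Z$, where $\{\cdot\}$ and $\langle\cdot\rangle$ differ by the jump from $0$ to $1$, as a separate case. There, Step 1 uses $\{t/d\}=0$ and $\langle -t/d\rangle=1$, and the same reindexing goes through. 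Alternatively, this case follows from the classical Ehrhart--Macdonald reciprocity for integer dilations of $\sigma$ with period $d$, which is implied by Theorem~\ref{main1}.
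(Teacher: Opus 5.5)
Your strategy is sound, and it takes a different route from the paper's proof of this theorem. You prove the finer identity $c_k(\mathring\sigma,-t)=(-1)^{n-k}c_k(\sigma,t)$ coefficient by coefficient: you pair the $i$-th summand of the closed formula with the $(n+1-i)$-th summand of the open one, use $s_{n-k}(-x)=(-1)^{n-k}s_{n-k}(x)$ together with the involution $u\mapsto d-u$ of \eqref{correspondence}, and then sum over $k$. That is exactly the paper's proof of Proposition~\ref{reciprocity law for coefficients}. The paper proves the theorem itself more directly. It writes $L(\mathring\sigma,-t)$ via the binomial formula \eqref{real open binomial}, which holds for all real $t$, converts $\#\bar D$ into $\#D$ by \eqref{correspondence}, substitutes $l=-n-1-b$, and finishes with $\binom{n+x}{n}=(-1)^n\binom{-x-1}{n}$. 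That route needs no symmetric-function bookkeeping; yours gives coefficient reciprocity as a by-product.

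In your route, however, the index bookkeeping is the entire proof, and as written it fails. First, the convention for $\langle\cdot\rangle$ is wrong. The paper's $\langle x\rangle=x-\lceil x\rceil$ lies in $(-1,0]$, not $(0,1]$. The formula $y_l(j,t)=l-j-\langle t/d\rangle$ is valid only with that definition; it is what puts the levels $-dy_0(j,t)$, $1\le j\le n+1$, inside $(0,(n+1)d]$. With your convention every level shifts by $d$, and you get $y_l(j,-t)=l-j-1+\{t/d\}$. Then, for your pairing $j=n+1-i$, $-dx_0(i,t)-dy_0(n+1-i,-t)=(n+2)d$, so \eqref{correspondence} cannot match the two counts. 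Second, the reindexing $l\mapsto n-l$ is not a permutation of $\{1,\dots,n\}$: it pulls in the count variable $y_0$ and drops $y_n$. The claimed identity $y_{n-l}(n+1-i,-t)=-x_l(i,t)$ is false under either convention.

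The correct computation runs as follows. Since $\langle -x\rangle=-\{x\}$ for every real $x$, you have $y_l(j,-t)=l-j+\{t/d\}$. With $j=n+1-i$ and $l\mapsto n+1-l$ you get $y_{n+1-l}(n+1-i,-t)=-x_l(i,t)$ for $1\le l\le n$, and $-dx_0(i,t)-dy_0(n+1-i,-t)=(n+1)d$, which is exactly what \eqref{correspondence} requires. Because $\langle -x\rangle=-\{x\}$ holds identically, there is no exceptional case $t/d\in\Z$. Your fallback for that case would in any event need Ehrhart--Macdonald reciprocity for the integral simplex $d\sigma$ as an outside input, since Theorem~\ref{main1} gives only quasi-polynomiality, not reciprocity.
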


With a method of Macdonald~\cite{macdonald1971polynomials}, which is generalized by Chen~\cite{chen1998weight}, the reciprocity law for a simplex can be extended to an arbitrary rational polytope, and more generally to a rational simplicial (polytopal) manifold $M$ with or without boundary (see Chen~\cite{chen2009gauss}), that is, $M$ is a cell complex, whose cells are rational polytopes and geometric realization is a manifold with or without boundary. For brevity, we state the result here only for rational polytopes.

\begin{thm}[Reciprocity law for rational polytope]\label{reciprocity law for polytope}
For $P$ each rational polytope of $\R^N$ with denominator $d$, the real Ehrhart quasi-polynomials $L(P,t)$ and $L(\mathring{P},t)$ satisfy the functional equation
\begin{equation}
L(P^{\circ},-t)=(-1)^{\dim P}L(P,t),\quad t\in\R.
\end{equation}
\end{thm}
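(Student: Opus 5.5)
The plan is to derive Theorem \ref{reciprocity law for polytope} from Theorem \ref{reciprocity law for Ehrhart function} by a Macdonald-type decomposition argument. First, triangulate $P$ into rational simplices without introducing new vertices. This gives a partition of $P$ into the disjoint union of the relative interiors $\mathring{\sigma}$ of all faces $\sigma$ (of all dimensions) of the triangulation $\mathcal{T}$. The same partition holds for every dilation $tP$ with $t>0$. Counting lattice points then gives
\[
L(P,t)=\sum_{\sigma\in\mathcal{T}} L(\mathring{\sigma},t),\qquad t>0.
\]
By Theorem \ref{main1}, each side is a real quasi-polynomial whose coefficients have period dividing a common period. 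The denominators $d(\sigma)$ all divide a suitable multiple of $d$, because the vertices of each $\sigma$ are vertices of $P$; a common period is therefore $d$ itself. A real quasi-polynomial vanishing for all $t>0$ vanishes identically: restrict to $t\in\tau+d\Z_{>0}$ for fixed $\tau$ and use the polynomial identity in the integer variable. Hence the identity holds for all $t\in\R$. In particular it holds at $t=0$, with the stated convention $L(\mathring{\sigma},0)=c_0(\mathring{\sigma},0)$.

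Next, apply Theorem \ref{reciprocity law for Ehrhart function} to each face. This gives
\[
L(P,-t)=\sum_{\sigma\in\mathcal{T}}(-1)^{\dim\sigma}L(\sigma,t),
\]
after replacing $t$ by $-t$ in the symmetric form of the simplex law, which also holds because $t$ ranges over $\R$. Expand each closed $L(\sigma,t)$ as $\sum_{\tau\le\sigma}L(\mathring{\tau},t)$. Collecting terms shows that the coefficient of $L(\mathring{\tau},t)$ is
\[
\sum_{\sigma\ge\tau}(-1)^{\dim\sigma}=(-1)^{\dim P}\bigl(1-\chi(\mathrm{lk}\,\tau)\cdot\text{stuff}\bigr).
\]
By the standard Euler characteristic computation for a triangulated ball, this coefficient equals $(-1)^{\dim P}$ if $\tau$ lies in the interior of $P$, and $0$ if $\tau\subset\partial P$. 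The reason is that the link of an interior face is a sphere, while the link of a boundary face is a ball. Therefore $L(P,-t)=(-1)^{\dim P}\sum_{\tau\not\subset\partial P}L(\mathring{\tau},t)=(-1)^{\dim P}L(\mathring{P},t)$. Replacing $t$ by $-t$ yields the claim.

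The main obstacle is the combinatorial identity $\sum_{\sigma\ge\tau}(-1)^{\dim\sigma}$ above. I would verify it via the Euler characteristic of the link: $\sum_{\sigma\ge\tau}(-1)^{\dim\sigma}=(-1)^{\dim\tau}\bigl(1-\tilde\chi\text{-type sum over }\mathrm{lk}\,\tau\bigr)$, with $\chi(S^{m})=1+(-1)^m$ and $\chi(B^m)=1$. A secondary point is being careful that the extensions to $t\le 0$ in Theorem \ref{main1} are compatible for $P$ and for all faces $\sigma$ simultaneously. This compatibility is exactly what the common period and the uniqueness of real quasi-polynomial patterns guarantee.
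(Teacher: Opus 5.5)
Your proposal is correct and takes the same route as the paper. Both triangulate $P$, apply the simplex reciprocity law to each relatively open face, re-expand each closed simplex into its open faces, and evaluate $\sum_{\sigma\ge\tau}(-1)^{\dim\sigma}$ by an Euler characteristic argument: the paper uses the star of $\tau$, you use the link, which is more precise. Your explicit extension of the decomposition identities from $t>0$ to all $t\in\R$ fills in a step the paper leaves implicit.

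Do clean up your first display. The coefficient is $(-1)^{\dim\tau}\bigl(1-\chi(\mathrm{lk}\,\tau)\bigr)$, not a $(-1)^{\dim P}$ prefactor with an unspecified factor. With $\chi(S^m)=1+(-1)^m$, $m=\dim P-\dim\tau-1$, and $\chi(B^m)=1$, this gives exactly $(-1)^{\dim P}$ for interior faces and $0$ for boundary faces.
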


The reciprocity law highlights a global symmetry for lattice counts on manifolds. A closely related phenomenon is the Ehrhart-Macdonald reciprocity for coefficient functions, first observed by Ehrhart and refined by Macdonald~\cite{macdonald1971polynomials} for integer dilations of integral polytopes. Extending this perspective, we show that the real Ehrhart coefficients $c_k(\sigma,t)$ and $c_k(\mathring{\sigma},t)$ satisfy an analogous relation. For clarity we state the result for simplices; the general case need more work and is not the main point of this paper, but it can be obtained by the same method of Macdonald~\cite{macdonald1971polynomials} and Chen~\cite{chen1998weight,chen2009gauss}.

\begin{prop}[Reciprocity law for coefficients] \label{reciprocity law for coefficients}
Let $\sigma$ be an $n$-simplex with denominator $d$. Then for all $t \in \R$, we have
\begin{equation}
c_k(\sigma,-t)=(-1)^{n-k} c_k(\mathring{\sigma},t).
\end{equation}
\end{prop}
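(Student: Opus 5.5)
We will derive the coefficient reciprocity from the explicit formulas \eqref{concrete coefficient of closed simplex} and \eqref{concrete coefficient of open simplex}, rather than from Theorem~\ref{reciprocity law for Ehrhart function}. The functional equation alone only gives $\sum_k c_k(\mathring{\sigma},-t)(-t)^k=(-1)^n\sum_k c_k(\sigma,t)t^k$. This does not let us separate the coefficients, because they are periodic in $t$ rather than constant.

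The first step is to compare the variable functions at $-t$ and $t$. For real $u=t/d$ we have $\{-u\}=1-\langle u\rangle$, where $\langle u\rangle$ denotes the fractional part taken in $(0,1]$, so the identity holds for all $u$, including integers. Hence
\[
x_l(i,-t)=l-i-1+\langle t/d\rangle = -\bigl((n-l)-(n-i+1)-\langle t/d\rangle\bigr)-n+\dots
\]
We will choose the reindexing $j=n+1-i$ (so $i\in\{0,\dots,n\}$ corresponds to $j\in\{1,\dots,n+1\}$) together with the reversal $l\mapsto n-l$, and check directly that
\[
x_l(i,-t)=-\,y_{n-l}(n+1-i,t).
\]
Indeed, $-y_{n-l}(n+1-i,t)=-(n-l)+(n+1-i)+\langle t/d\rangle=l-i+1+\langle t/d\rangle$. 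This differs from $x_l(i,-t)=l-i-1+\langle t/d\rangle$ by the constant $2$. So the index shift must be adjusted, for instance by $j=n-1-i$ or by $j=n+1-i$ with a shifted $l$. Pinning down the correct matching, including the treatment of the distinguished variable $x_0$ versus the remaining variables $x_1,\dots,x_n$, is the first bookkeeping task.

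The second step uses two pieces of structure. The elementary symmetric polynomial $s_{n-k}$ is symmetric and homogeneous of degree $n-k$, so $s_{n-k}(-z_1,\dots,-z_n)=(-1)^{n-k}s_{n-k}(z_1,\dots,z_n)$. Since reversing or permuting the variables is harmless, this produces exactly the sign $(-1)^{n-k}$. The prefactor $1/(n!d^k)$ is identical in the two formulas.

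The remaining, and main, obstacle is the counting factor: we need
\[
\#D(\sigma,-dx_0(i,-t))=\#\bar D(\sigma,-dy_0(j,t))
\]
for the matched indices. Here $-dx_0$ and $-dy_0$ are real dilation parameters of the form $d(m+\{\cdot\})$ and $d(m'+\langle\cdot\rangle)$. To prove it we will use the central symmetry of the half-open parallelepiped $\sum_{i=0}^n[0,d\alpha_i)$ in the cone over $\sigma$, where the $d\alpha_i$ are lattice vectors. The map $z\mapsto \sum_i d\alpha_i - z$ sends $\sum_i[0,d\alpha_i)$ bijectively onto $\sum_i(0,d\alpha_i]$ and preserves the lattice. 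If $\alpha_i$ are the vertices of $\sigma$ and the "level" of $z=\sum\lambda_i\alpha_i$ is $\sum\lambda_i$, then this map sends level $\ell$ to level $(n+1)d-\ell$, and it exchanges closed with open conditions. Concretely, it carries $\ell\sigma$ to $((n+1)d-\ell)\mathring{\sigma}$ within the box, with $\mathring\sigma$ understood as positivity of the coordinates $\lambda_i$. We will verify that $(n+1)d-(-dx_0(i,-t))=-dy_0(j,t)$ under the matching fixed in the first step. This constraint is what determines the correct index shift, and it should resolve the discrepancy of $2$ noticed there.

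Combining the three steps term by term and summing over $i$ then gives $c_k(\sigma,-t)=(-1)^{n-k}c_k(\mathring{\sigma},t)$ for all real $t$. At $t\in d\Z$ the convention $\langle\cdot\rangle\in(0,1]$ is exactly what makes the identity $\{-u\}=1-\langle u\rangle$ hold, so these points need no separate treatment.
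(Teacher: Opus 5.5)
Your strategy is the paper's: pair the terms of \eqref{concrete coefficient of closed simplex} at $-t$ with those of \eqref{concrete coefficient of open simplex} at $t$, take the sign from homogeneity of $s_{n-k}$, and match the counting factors via \eqref{correspondence}. But the step that carries all the content, the index matching, is left open, and as you set it up it cannot be closed.

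\textbf{The cause is a misread convention.} The paper defines $\langle x\rangle:=x-\lceil x\rceil\in(-1,0]$, so $\{-u\}=-\langle u\rangle$. The printed ``$0\le\langle x\rangle<1$'' is a misprint; compare $\{x\}-\langle x\rangle=1$ for $x\notin\Z$ and the ranges $-1<\langle t\rangle\le 0$ in Section~\ref{examples}. Your identity $\{-u\}=1-\langle u\rangle$ is right for your $(0,1]$ convention. Under that reading, however, \eqref{concrete coefficient of open simplex} is itself wrong: the levels $-dy_0(j,t)=d(j+\langle t/d\rangle)$ for $1\le j\le n+1$ would sweep $(d,(n+2)d]$ rather than the support $(0,(n+1)d]$ of $\#\bar D(\sigma,\cdot)$.

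Consequently your level condition $(n+1)d-(-dx_0(i,-t))=-dy_0(j,t)$ forces $j=n-i\in\{0,\dots,n\}$. That is not a bijection onto $\{1,\dots,n+1\}$, so the condition exposes the discrepancy rather than resolving it. Your suggested repairs also fail:
\begin{itemize}
\item $j=n-1-i$ leaves the range $\{1,\dots,n+1\}$.
\item $l\mapsto n-l$ is not a permutation of $\{1,\dots,n\}$. It sends $l=n$ to the distinguished index $0$, which enters $\mathbf{s}_{n-k}$ only through $\#D$, so the symmetry of $s_{n-k}$ cannot be applied.
\end{itemize}

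\textbf{With the correct convention the bookkeeping is immediate.} We have $x_l(i,-t)=l-i+\langle t/d\rangle$ and $y_{l'}(j,t)=l'-j-\langle t/d\rangle$, so $x_l(i,-t)+y_{l'}(j,t)=l+l'-i-j$. Take $j=n+1-i$, a bijection $\{0,\dots,n\}\to\{1,\dots,n+1\}$, and $l'=n+1-l$, a permutation of $\{1,\dots,n\}$. This gives $x_l(i,-t)=-y_{n+1-l}(n+1-i,t)$, hence the factor $(-1)^{n-k}$. The case $l=l'=0$ gives $-dx_0(i,-t)-dy_0(n+1-i,t)=(n+1)d$, which is exactly the level relation in \eqref{correspondence}. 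Summing over $i$ finishes the proof.

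Separately, your opening claim that Theorem~\ref{reciprocity law for Ehrhart function} cannot separate the coefficients is mistaken. All $c_k$ share the period $d$, so on a residue class $t\in t_0+d\Z$ both sides of the functional equation are genuine polynomials in $t$. Comparing coefficients then yields the proposition at once.
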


Moreover, $c_k(P,t)$ and $c_k(\mathring{P},t)$ are piecewise polynomial. Eva Linke~\cite{linke2011rational} proved this in the rational case by showing that if a quasi-polynomial $f(t)=\sum_{k=0}^m c_k(t)t^k$ is constant on a interval, then the coefficient functions $c_k(t)$ are polynomials of degree $n-k$ and satisfy the derivative relation $c_k(t)'=-(k+1)c_{k+1}(t)$ on that interval. Baldoni et al.~\cite{baldoni2013intermediate} established the real case using intermediate sums that interpolate between discrete sums and integrals.

Our approach handles rational polytopes of arbitrary dimension and works directly with real dilations, thereby uncovering additional properties and recovering the rational case as a special case.

\begin{prop}\label{piecewise}
Let $P$ be a rational $n$-polytope of $\R^N$ with denominator $d$. If the vertices of $P$ are linearly dependent, then $c_k(P,t)$ and $c_k(\mathring{P},t)$ are piecewise polynomial with period $d$; on each piece they have degree $n-k$ and leading coefficient $(-1)^{n-k}\binom{n}{k}\vol(P)$. Otherwise, both $c_k(P,t)$ and $c_k(\mathring{P},t)$ vanish almost everywhere.
\end{prop}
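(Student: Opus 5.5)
We prove Proposition~\ref{piecewise} by reducing to the basic fact that $L(P,t)$ is a step function of $t$. The key tool is the following observation, due to Linke~\cite{linke2011rational}: if a quasi-polynomial $f(t)=\sum_{k=0}^n c_k(t)t^k$ is constant on an open interval $I$, then the coefficients are forced to be polynomials there.

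\textbf{The condition on the vertices.} The hypothesis that the vertices of $P$ are linearly dependent means exactly that $\aff P$ contains the origin. Indeed, a nontrivial linear relation among the vertices can be normalized to have coefficient sum $1$ whenever that sum is nonzero. The remaining case, where the coefficients sum to zero, only expresses affine dependence, and we treat it via the dimension count below.

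\textbf{Case 1: $\aff P\ni 0$.} Set $V=\aff P$, a rational linear subspace of dimension $n$. Then $tP\subset V$ for every $t$, and $L(P,t)$ counts points of the $n$-dimensional lattice $V\cap\Z^N$ in $tP$.
\begin{enumerate}
\item \emph{Step-function structure.} Since $P$ is rational, each facet hyperplane of $P$ in $V$ has the form $\{\langle a,x\rangle=b\}$ with $a,b$ rational. A lattice point crosses the boundary of $tP$ only when $t\langle a,x\rangle$-type equalities hold, i.e.\ at $t\in \frac{1}{b}\langle a,\Z^N\rangle$. These values form a discrete set that is invariant under translation by $d$. Hence $L(P,t)$ is constant on the open intervals between consecutive points of a discrete, $d$-periodic set $T$.
\item \emph{Polynomiality on each piece.} On such an interval $I$, periodicity of the $c_k$ (Theorem~\ref{main1}) gives $\sum_k c_k(t+md)(t+md)^k=\sum_k c_k(t)t^k$ for all $m\in\Z$, with $t,t+md\in I+md$. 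Fix $t\in I$ and regard $g(s)=\sum_k c_k(t)(s)^k$ as a polynomial in $s$. It equals $L$ on the full coset $t+d\Z$. Applying this to neighbouring points of $I$, where $L$ is constant, we follow Linke's argument. Writing $c_k(t+h)$ via Taylor-like comparison of $\sum_k c_k(t+h)(t+h)^k=\sum_k c_k(t)t^k$ over all $t+h+md$ forces $\sum_k c_k(t+h)s^k=\sum_k c_k(t)(s-h)^k$ as polynomials in $s$. This yields $c_j(t+h)=\sum_{k\ge j}\binom{k}{j}(-h)^{k-j}c_k(t)$, a polynomial in $h$ of degree $n-j$, together with $c_k'=-(k+1)c_{k+1}$.
\item \emph{Leading coefficient.} The leading coefficient of $c_k$ is $(-1)^{n-k}\binom{n}{k}c_n$. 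Here $c_n\equiv\vol(P)$, the relative volume, because $L(P,t)/t^n\to\vol(P)$ and $c_n$ is periodic. The same reasoning applies verbatim to $\mathring P$.
\end{enumerate}

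\textbf{Case 2: $0\notin\aff P$.} Lattice points of $tP$ lie in $t\aff P$. Choose a rational affine functional $\ell$ that is identically $1$ on $\aff P$ and vanishes on the linear span direction. Then $x\in tP\cap\Z^N$ forces $t=\ell(x)\in\frac{1}{q}\Z$, where $q$ is a fixed integer. So $L(P,t)=0$ outside a discrete set $T$. On each gap interval the quasi-polynomial is identically zero, so by Step 2 each $c_k$ is a polynomial there. Each $c_k$ is also periodic, hence bounded in $h$ up to the period, and since $\sum_k c_k(t+h)(t+h+md)^k=0$ for all $m$, every $c_k$ must vanish on the interval. Therefore all $c_k$ vanish off $T$, i.e.\ almost everywhere, and likewise for $\mathring P$.

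\textbf{Main obstacle.} The delicate step is Step 2. We must make rigorous that constancy on $I$ transfers to polynomial identities in $s$. This uses the fact that $g$ agrees with $L$ on infinitely many points $t+md$, all lying in translates of $I$ where $L$ takes the same constant value. That in turn requires the piece structure to be exactly $d$-periodic, which we would check first using the denominator $d$.
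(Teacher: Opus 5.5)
Your proof is correct in substance and takes a genuinely different route from the paper. The paper works simplex by simplex from the explicit formula \eqref{coefficient-of-open-simplex}. On intervals where the step functions $\#\bar D(\sigma,t+dj-d)$ are constant, $c_k(\mathring{\sigma},t)$ is visibly a combination of elementary symmetric polynomials in the linear functions $y_l(j,t)$. Its $t^{n-k}$-coefficient is $\frac{(-1)^{n-k}}{n!d^n}\binom{n}{k}\sum_j\#\bar D(\sigma,t+dj-d)$, and Proposition~\ref{prop:volume counting}, an exact lattice-point count in a half-open fundamental parallelepiped, evaluates this sum as $n!d^n\vol(\sigma)$. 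In the independent case $\#\bar D$ is supported on finitely many levels, and a general $P$ is handled by summing over a triangulation. You never open up that formula. You use only that $L(P,t)$ is constant on the gaps of a discrete $d$-invariant set, the $d$-periodicity from Theorem~\ref{main1}, Linke's rigidity argument, and the asymptotics $L(P,t)/t^n\to\vol(P)$ along $t+d\Z$. Your route treats an arbitrary $P$ directly and describes the pieces intrinsically, as the times at which a lattice point meets a moving facet hyperplane. It also yields the shift formula $c_j(t+h)=\sum_{k\ge j}\binom{k}{j}(-h)^{k-j}c_k(t)$, and hence Proposition~\ref{derivative}, for free. The paper's route instead gives the polynomials and breakpoints explicitly in terms of determined sets, and identifies the leading coefficient by an exact count rather than a limit.

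Some repairs are needed.

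\emph{The hypothesis.} Linear dependence of the vertices is equivalent to $0\in\aff P$ only for simplices. A non-simplex always has affinely, hence linearly, dependent vertices, and your own Case 2 shows that its coefficients vanish almost everywhere when $0\notin\aff P$ (e.g.\ $[0,1]^2\times\{1\}\subset\R^3$). So what you actually prove is the dichotomy $0\in\aff P$ versus $0\notin\aff P$ from the abstract. That is the correct form of the statement, and the paper's Step 2 tacitly uses it too. Drop the promised ``dimension count''.

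\emph{Step 2.} The first displayed identity is false as written, since it says $L(t+md)=L(t)$; in general $L$ takes different constants on different translates $I+md$. What Linke's argument needs is
\[
\sum_{k}c_k(t)(t+md)^k=L(t+md)=L(t+h+md)=\sum_{k}c_k(t+h)(t+h+md)^k\quad\text{for all large } m.
\]
This holds because $t+md$ and $t+h+md$ lie in the same gap $I+md$. Consequently $\sum_k c_k(t)s^k$ and $\sum_k c_k(t+h)(s+h)^k$ agree at infinitely many points, so they are equal as polynomials.

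\emph{Step 1.} Discard facets with $b=0$, since they do not move. Also state why the breakpoint set is $d$-invariant: $dF$ has integral vertices, so $db\in\langle a,\Z^N\rangle$, hence $d$ lies in the group $\frac{1}{b}\langle a,\Z^N\rangle$.
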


The piecewise polynomial structure of the coefficient functions allows us to differentiate them on each piece. This leads to a simple formula relating consecutive coefficient functions:

\begin{prop}\label{derivative}
Let $P$ be a rational $n$-polytope of $\R^N$ with denominator $d$. Then, on each interval of piecewise polynomiality as described in \eqref{intervals}, we have
\begin{align*}
\frac{d}{dt}c_k(P,t)&=-(k+1)c_{k+1}(P,t),\quad\frac{d}{dt}c_k(\mathring{P},t)=-(k+1)c_{k+1}(\mathring{P},t).
\end{align*}
\end{prop}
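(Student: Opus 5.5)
The natural route is Linke's argument: the counting function is locally constant in $t$, and the quasi-polynomial expansion is unique. First I fix the intervals of piecewise polynomiality from \eqref{intervals}. These are the open intervals between consecutive points of the finite, $d$-periodic set of critical dilations $t$ at which some facet hyperplane of $tP$ (or some lower-dimensional face) passes through a lattice point. On such an interval $I$, no lattice point enters or leaves $tP$, because the inequalities defining $tP\cap\Z^N$ do not change truth value. Hence $L(P,t)$ is constant on $I$, and likewise $L(\mathring{P},t)$. By Proposition \ref{piecewise}, each $c_k(P,t)$ is a polynomial on $I$. Otherwise (when the $c_k$ vanish almost everywhere) the derivative identity holds trivially on the pieces, since both sides are $0$.

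Next, fix $t_0\in I$ and $s$ small. By Theorem \ref{main1}, period $d$ gives
\[
L(P,t_0+s+md)=\sum_{k=0}^{n}c_k(P,t_0+s)\,(t_0+s+md)^k \quad\text{for all integers } m\ge 0 .
\]
Also, $t_0+s+md$ lies in $I+md$, which is again an interval of constancy, so the left side does not depend on $s$. Call it $A(m)$.

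Expand the right side in powers of $m$:
\[
(t_0+s+md)^k=\sum_j \binom{k}{j}(t_0+s)^{k-j}(md)^j .
\]
Since $A(m)$ is a function of $m$ alone and the identity holds for infinitely many $m$, uniqueness of polynomial coefficients shows that for each $j$ the quantity
\[
B_j(s)=\sum_{k\ge j}\binom{k}{j}c_k(P,t_0+s)(t_0+s)^{k-j}
\]
is independent of $s$.

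Then differentiate $B_j$ in $s$ at $s=0$ and write $u=t_0$. Setting $B_j'(0)=0$ gives
\[
\sum_{k\ge j}\binom{k}{j}\bigl[c_k'(u)u^{k-j}+(k-j)c_k(u)u^{k-j-1}\bigr]=0 .
\]
I then argue by downward induction on $j$. For $j=n$ this reads $c_n'=0$, and the statement is true because $c_{n+1}=0$. For general $j$, the claimed formula $c_k'=-(k+1)c_{k+1}$, already known for all $k>j$, should reduce the identity to $c_j'+(j+1)c_{j+1}=0$. The bookkeeping step is to substitute it. For $k\ge j+1$ the terms $\binom{k}{j}c_k'u^{k-j}$ become $-\binom{k}{j}(k+1)c_{k+1}u^{k-j}$. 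The other sum is $\sum_{k\ge j+1}\binom{k}{j}(k-j)c_k u^{k-j-1}$; reindex it with $k\to k+1$. The identity
\[
\binom{k+1}{j}(k+1-j)=(k+1)\binom{k}{j}
\]
makes the two sums telescope exactly, except for the term $k=j$ of the second sum, which is $(j+1)c_{j+1}$. What remains is $c_j'+(j+1)c_{j+1}=0$.

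The identical argument applies to $L(\mathring{P},t)$ using \eqref{real open quasi}.

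The main obstacle is justifying that $A(m)$ is genuinely independent of $s$ for all $m$ simultaneously, that is, that the critical set is exactly $d$-periodic so that $I+md$ is again an interval of constancy. I would verify this from the fact that $dP$ is integral. A lattice point lying on a supporting hyperplane $\langle a,x\rangle=tb$ of $tP$ with $t\in I$ would produce one on $\langle a,x\rangle=(t+d)b$, by translating by a lattice point of the corresponding face of $dP$. This translation works for faces where $b\neq 0$. When the affine hull passes through the origin and $b=0$, the hyperplane does not move with $t$ at all, so it never creates a critical dilation.
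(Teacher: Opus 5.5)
Your argument is correct, but it takes a different route from the paper. The paper works simplex by simplex from the explicit formula \eqref{coefficient of closed simplex}. On a piece where every $\#D(\sigma,-dx_0(i,t))$ is constant, $c_k(\sigma,t)$ is a fixed combination of $s_{n-k}(x_1(i,t),\dots,x_n(i,t))$, and each $x_l(i,t)$ is affine in $t$ with slope $-1/d$. The identity $\frac{d}{dt}s_m=-\frac{1}{d}(n-m+1)s_{m-1}$ then gives $\frac{d}{dt}c_k=-(k+1)c_{k+1}$ directly. The open simplex is handled through coefficient reciprocity, and the polytope by summing over a decomposition.

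You instead use Linke's structural argument: constancy of $L$ on $I+md$, $d$-periodicity of the $c_k$, and uniqueness of the coefficients of a polynomial in $m$, followed by a triangular induction. I checked that your telescoping via $\binom{k+1}{j}(k+1-j)=(k+1)\binom{k}{j}$ is right. This route never touches the determined sets. It treats $P$ and $\mathring{P}$ uniformly, with no decomposition or reciprocity, and it applies to any quasi-polynomial with $d$-periodic coefficients that is constant on $d$-periodic intervals. In fact the constancy of the $B_j$ already forces $c_k(u)=\sum_{j\ge k}\binom{j}{k}B_j(-u)^{j-k}$ on $I$. That yields polynomiality and the derivative relation (via $\binom{j}{k}(j-k)=(k+1)\binom{j}{k+1}$) without Proposition~\ref{piecewise} and without the induction. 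The paper's computation, by contrast, shows the relation inside the explicit simplex formulas, and it comes with the degree and leading-coefficient information.

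One correction. The intervals in \eqref{intervals} are not defined by critical dilations of faces of $P$. They are defined by the jump points of $\#\bar D(\sigma_p,\cdot)$ over a simplicial decomposition, and the two partitions need not coincide. So your invocation of Proposition~\ref{piecewise} on your own $I$ is not literally justified, but you do not need your critical-dilation setup or the ``main obstacle'' paragraph. For $t\in(r_i+qd,r_{i+1}+qd)$ we have $\lceil t/d\rceil=q+1$, and every $\#\bar D(\sigma_p,dj+d\langle t/d\rangle)$ is constant. Hence \eqref{Lsigmaopen} shows that each $L(\mathring{\sigma}_p,t)$ is constant there, and therefore so are $L(P,t)$ and $L(\mathring{P},t)$. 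These intervals are invariant under translation by $d$ by construction. Since the $c_k$ are $d$-periodic, you may also take $t_0\in(0,d)$, so that only genuine lattice counts enter $A(m)$.
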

At the end, we present an explicit two-dimensional example to illustrate the real Ehrhart quasi-polynomial. The paper is organized as follows. Section~2 derives the real Ehrhart quasi-polynomial together with its reciprocity law and contains the proofs of Theorems~\ref{main1} and~\ref{reciprocity law for Ehrhart function}. Section~3 studies the coefficient functions $c_k(P,t)$ and $c_k(\mathring{P},t)$ and establishes Propositions~\ref{reciprocity law for coefficients} and \ref{piecewise}. Section~4 works out a detailed example that is compatible with the previous results.

\section{Real Ehrhart quasi-polynomials}

Let $\R^N$ be an $N$-dimensional Euclidean space with integral lattice $\Z^N$. For a polytope $P\subseteq\R^N$, denote by $P^{\circ}=P\setminus\partial P$ the interior of $P$ in the affine span of $P$, known as a {\em relatively open polytope}. We call $P$ {\em integral} (resp. {\em rational}) if all vertices have integer (resp. rational) coordinates. The {\em dilation} of $P$ by a real number $t\ge 0$ is defined as $tP=\{tx:x\in P\}$, and the dilation of $\mathring{P}$ by real number $t>0$ is defined as $t\mathring{P}=\{tx:x\in\mathring{P}\}$.

Let $P$ be a rational polytope of $\R^N$. We often consider the counting functions
\begin{align}
L(P,t)&=\#(tP\cap\Z^N),  \qquad t\geq 0,\\
L(\mathring{P},t)&=\#(t\mathring{P}\cap\Z^N), \qquad t>0.
\end{align}
We define {\em denominator} of a rational polytope $P$ to be the smallest positive real number such that $dP$ is an integral polytope. It is easy to see that the denominator of $P$ is given by 
\[
d=\frac{\lcm(P)}{\gcd(P)},
\]
where $\gcd(P)$ (resp. $\lcm(P)$) denotes the greatest common divisor of the numerators (resp. least common multiple of the denominators) of the vertex coordinates written in reduced form. This convention differs from the traditional one, defined as the smallest positive integer $d$ with $dP$ integral, which is $\lcm(P)$ without dividing by $\gcd(P)$.

Every rational polytope can be decomposed into a disjoint union of relatively open rational simplices, it is crucial to find concrete patterns for the counting functions $L(\sigma,t)$ with $t\geq 0$ and $L(\mathring{\sigma},t)$ with $t>0$ of rational simplices $\sigma$. To obtain explicit formulas, we modify the methods of Chen~\cite{chen2005ehrhart,chen2002lattice}, which are rooted originally from Macdonald's idea~\cite{macdonald1963volume,macdonald1971polynomials}, to real dilations of $\sigma$. The following definition originates from Stanley's terminology~\cite{stanley2011enumerative} of determined sets.

\begin{defn}\label{DeterminedSets}
Let $\sigma$ be a rational $n$-simplex of $\R^N$ with vertices $\alpha_0,\alpha_1,\dots,\alpha_n$ and denominator $d$. The {\em determined sets of $\sigma$ at level $\ell\in\R$} is defined by
\begin{align}
D(\sigma,\ell)&=\bigg\{\sum_{j=0}^n u_j \alpha_j\in{\Bbb Z}^N\:\Big|\: 0\leq u_j<d,\ \sum_{j=0}^{n} u_j=\ell\bigg\}, \label{DL}\\
\bar D(\sigma,\ell)&=\bigg\{\sum_{j=0}^nv_j\alpha_j\in{\Bbb Z}^N\:\Big|\: 0<v_j\leq d,\ \sum_{j=0}^{n} v_j=\ell\bigg\}. \label{barDL} 
\end{align}
\end{defn}
The determined sets $D(\sigma,\ell)$ and $\bar D(\sigma,\ell)$ are empty for real numbers $\ell\notin[0,(n+1)d)$ and $\ell\notin(0,(n+1)d]$, respectively. The following proposition captures the basic behavior of these sets, which is intuitively true and is helpful for proving the main results of this paper.

\begin{prop}\label{step-function}
For each $\sigma$ rational $n$-simplex with denominator $d$, the counting functions $\#D(\sigma,\ell)$ and $\#\bar D(\sigma,\ell)$ of real variable $\ell$  are piecewise constant functions.
\end{prop}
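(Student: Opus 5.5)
We show that $\#D(\sigma,\ell)$ and $\#\bar D(\sigma,\ell)$ take only finitely many values, each on a finite union of intervals, so that each is a step function of $\ell$. The key observation is that the level $\ell$ only enters through the constraint $\sum u_j=\ell$. Moreover, the set of candidate lattice points is finite and does not depend on $\ell$.

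Let
\[
B=\Big\{\textstyle\sum_{j=0}^n u_j\alpha_j : 0\le u_j<d\Big\}
\]
be the half-open parallelepiped-like region; the region for $\bar D$, with $0<v_j\le d$, is handled the same way. $B$ is bounded, so $B\cap\Z^N$ is a finite set $\{p_1,\dots,p_M\}$. For each $p\in B\cap\Z^N$, the set
\[
U(p)=\Big\{(u_0,\dots,u_n)\in[0,d)^{n+1} : \textstyle\sum u_j\alpha_j=p\Big\}
\]
is a convex set cut out by linear equations together with half-open box constraints. When the $\alpha_j$ are affinely independent but linearly dependent, it can be a segment rather than a point. The set of levels $\Lambda(p)=\{\sum u_j : u\in U(p)\}$ is the image of this convex set under a linear functional, so it is an interval (possibly degenerate, possibly half-open). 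We then have
\[
\#D(\sigma,\ell)=\#\{i : \ell\in\Lambda(p_i)\}.
\]
Strictly, a lattice point could be counted once while arising for several $\ell$, and the set $D(\sigma,\ell)$ counts each point $p$ at most once per $\ell$, which matches this formula exactly.

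It follows that $\#D(\sigma,\ell)=\sum_{i=1}^M \mathbf 1_{\Lambda(p_i)}(\ell)$ is a finite sum of indicator functions of intervals. Such a sum is piecewise constant: the real line is cut by the finitely many endpoints of the intervals $\Lambda(p_i)$ into finitely many open intervals and points, and the function is constant on each of these. The same argument applies verbatim to $\bar D(\sigma,\ell)$, with the box $(0,d]^{n+1}$ in place of $[0,d)^{n+1}$.

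The main point needing care is the case where the vertices are linearly independent. Then $U(p)$ is a single point and $\Lambda(p)$ is a single value, so $\#D$ is supported on finitely many points. This is still piecewise constant, with value zero off a finite set, and it is consistent with the later ``vanishes almost everywhere'' statement. Beyond this, the only step to check is that $B$ is bounded, which holds because it is the image of a bounded box under a linear map.
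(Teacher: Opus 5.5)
Your proposal is correct and follows essentially the same argument as the paper: there are finitely many candidate lattice points (the image of the bounded box is bounded), and each contributes the indicator of an interval, namely the image of its convex fiber under the functional $\sum_j u_j$, so $\#D(\sigma,\ell)$ is a finite sum of interval indicators. The only difference is cosmetic: the paper handles $\bar D$ through the bijection $u_j\mapsto d-u_j$, which gives $\#D(\sigma,\ell)=\#\bar D(\sigma,(n+1)d-\ell)$, whereas you simply rerun the argument with the box $(0,d]^{n+1}$.
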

\begin{proof}

Let $a$ and $b$ be nonnegative real numbers such that $a+b=(n+1)d$.
There is a one-to-one correspondence between $D(\sigma,a)$ and $\bar D(\sigma,b)$, sending $\sum_{j=0}^n u_j \alpha_j$ to $\sum_{j=0}^n (d-u_j) \alpha_j$. This is well-defined because $\sum_{j=0}^nu_j\alpha_j$ is a point of ${\Bbb Z}^N$ if and only if $\sum_{j=0}^n(d-u_j)\alpha_j$ is a point of ${\Bbb Z}^N$. Consequently
\begin{equation}\label{correspondence}
\#D(\sigma,\ell)=\#\bar D(\sigma,(n+1)d-\ell),
\end{equation}
and it suffices to prove the conclusion for $\# D(\sigma,\ell)$.

Let $A=(a_{ij})_{N\times(n+1)}$ be the matrix with columns $\alpha_0,\ldots,\alpha_n$, set $\bm u=(u_0,\ldots,u_n)^T\in[0,d)^{n+1}$, and write $\bm b=A\bm u=(b_1,b_2,\ldots,b_N)^T\in\Col A$. Then
\[
\|\bm b\|_\infty=\|A\bm u\|_\infty\leq\|A\|_\infty\|\bm u\|_\infty<\|A\|_\infty d=:M.
\]
Hence $D(\sigma,\ell)\subseteq\Z^N\cap[-M,M]^N\cap\Col A$ for every $\ell\in[0,(n+1)d)$. Fix $\bm b$ in this finite set. Determining the values of $\ell$ such that $\bm b\in D(\sigma,\ell)$, which amounts to finding the range of $\ell(\bm u)=\bm c^T\bm u$ (with $\bm c=(1,\ldots,1)^T$) under the constraints
\[
A\bm u=\bm b,\qquad 0\le u_j<d.
\]
The feasible set is the intersection of the affine solution space $W=\{\bm u\in\R^{n+1}:A\bm u=\bm b\}$ with the cube $[0,d)^{n+1}$. This intersection is convex (possibly empty), so the image of the linear functional $\ell(\bm u)=\bm c^T\bm u$ restricted to it is an interval $I_{\bm b}$, which may degenerate to a point or vanish entirely. Consequently,
\[
\#D(\sigma,\ell)=\sum_{\bm b\in\Z^N\cap[-M,M]^N}1_{I_{\bm b}}(\ell)
\]
is a step function of $\ell$, proving the proposition.
\end{proof}

To obtain an explicit formula for $\#D(\sigma,\ell)$ we must determine the interval $I_{\bm b}$ for each $\bm b\in \Z^N\cap[0,M)^N\cap\Col A$ (although this search region can be reduced in practice). The condition $\bm b\in D(\sigma,\ell)$ is equivalent to the existence of $\bm u_0\in[0,d)^{n+1}$ with $A\bm u_0=\bm b$ and $\bm c^T \bm u_0=\ell$, namely
\begin{align}\label{matrix equation for determined sets}
\binom{A}{\bm c^T}\bm u_0=\binom{\bm b}{\ell}.
\end{align}
Let $B=(A^T\ \bm c)^T$. Since $\alpha_0,\alpha_1,\dots,\alpha_n$ are vertices of an $n$-simplex, they are affinely independent. Therefore
\[
\text{rank}B=\text{rank}\begin{pmatrix}
    \alpha_0&\alpha_1&\dots&\alpha_n\\
    1&1&\dots&1
\end{pmatrix}=n+1.
\]
So $B$ has full column rank and therefore admits a left inverse $B^{-1}_L=(B^TB)^{-1}B^T$. 

If $\alpha_0,\ldots,\alpha_n$ are linearly independent, the system $A\bm u=\bm b$ has the unique solution $\bm u_0=(A^TA)^{-1}A^T\bm b$ (because $\bm b\in\Col A$). When $\bm u_0\in[0,d)^{n+1}$ we obtain $I_{\bm b}=\{\bm c^T \bm u_0\}$; otherwise $I_{\bm b}=\varnothing$.

If the vertices are linearly dependent, so that $\operatorname{rank}A=n$, pick any particular solution $\bm u_*$ to $A\bm u=\bm b$. Column operations on the augmented matrix of \eqref{matrix equation for determined sets} then yield
\[
\operatorname{rank}\begin{pmatrix}
A & \bm b\\
\bm c^T & \ell
\end{pmatrix}=\operatorname{rank}\begin{pmatrix}
A & 0\\
\bm c^T & \ell-\bm c^T\bm u_*
\end{pmatrix}=n+1,
\]
so the system \eqref{matrix equation for determined sets} is consistent for every $\ell$ and has the unique solution $\bm u_0=B^{-1}_L\binom{\bm b}{\ell}$. Consequently $\bm b\in D(\sigma,\ell)$ precisely when $\bm u_0\in[0,d)^{n+1}$. Equivalently, $I_{\bm b}$ consists of those $\ell$ that satisfy
\[
0\le \sum_{j=1}^{N} b_{ij}b_j+b_{i,N+1}\ell<d,\qquad i=0,1,\ldots,n,
\]
where $b_{ij}$ denotes the $(i,j)$-entry of $B^{-1}_L$. This observation inspires the following routine for computing $\#D(\sigma,\ell)$:

\begin{algorithm}[H]
\caption{Evaluate $\#D(\sigma,\ell)$}
\label{algorithm}
\LinesNumbered
\KwIn {$A=(\alpha_0,\ldots,\alpha_n)$, dimensions $n,N$, denominator $d$.}
\KwOut {$\# D(\sigma,\ell)=\sum_{b\in S}1_{I_{\bm b}}(l)$}
Compute $M_i=\sum_{j=0}^n|a_{ij}|d$ and set $S=\Z^N\cap([0,M_1]\times\cdots\times[0,M_N])\cap\Col A$\;
\If{$\operatorname{rank}A=n+1$}
{
    \For{$\bm b\in S$}    
    { 
    calculate $\bm u_0=(A^TA)^{-1}A^T\bm b$\;
    If $u_0\in[0,d)^{n+1}$, set $I_{\bm b}=\{\bm c^T \bm u_0\}$; else, set $I_{\bm b}=\emptyset$.
    }
}
\ElseIf{$\operatorname{rank}A=n$}
{
Compute the left inverse $B_L^{-1}=(b_{ij})$ of $(A^T\ \bm c)^T$ with $\bm c=(1,\ldots,1)^T$\;
\For{$\bm b\in S$}    
    { 
	calculate $I_{\bm b}$ by solving the inequalities $0\le \sum_{j=1}^{N} b_{ij}b_j+b_{i,N+1}\ell<d$ with respect to $\ell$.
    }
}
Return $\sum_{b\in S}1_{I_{\bm b}}(\ell)$.
\end{algorithm}

These calculations decompose the lattice-counting problem for $L(\sigma,t)$ and $L(\mathring{\sigma},t)$ into a summation over admissible levels $\ell$. Chen's lattice-point enumeration method~\cite{chen2000counting} unifies this decomposition: the argument for integer dilations simultaneously yields a closed formula for real dilations, as stated next.

\begin{prop}\label{Prop:Chen}
Let $\sigma$ be a rational $n$-simplex with denominator $d$. Then
\begin{align}
\#(t\sigma\cap\Z^N)&=\sum_{a\in\left(\frac{t}{d}-n-1,\frac{t}{d}\right]\cap\Z}\#D(\sigma;t-da) \binom{a+n}{n},\quad t\in\R_{\geq 0},
\label{eq:closed-simplex}\\
\#(t\mathring{\sigma}\cap\Z^N)&=\sum_{b\in\left[\frac{t}{d}-n-1,\frac{t}{d}\right)\cap\Z} \#\bar D(\sigma;t-db) \binom{b+n}{n},\quad t\in\R_{>0}.
\label{eq:open-simplex}
\end{align}
\end{prop}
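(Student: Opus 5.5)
The plan is to use the standard cone-lifting / fundamental-parallelepiped decomposition (Macdonald, Stanley, Chen), adapted to real levels. Lift $\sigma$ to the cone over $\sigma$ placed at height one. A point $x\in\Z^N$ lies in $t\sigma$ exactly when $x=\sum_{j=0}^n w_j\alpha_j$ with $w_j\ge 0$ and $\sum_j w_j=t$. The coefficients $w_j$ are uniquely determined, since the $\alpha_j$ are affinely independent: $x$ and $t$ together fix them via the full column rank of $B$ computed above.

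First I decompose each $w_j$ by division with remainder modulo $d$. Write $w_j=u_j+dk_j$ with $k_j=\lfloor w_j/d\rfloor\in\Z_{\ge 0}$ and $u_j\in[0,d)$. Because $dP$ is integral, each $d\alpha_j\in\Z^N$. Hence $x\in\Z^N$ if and only if $y:=\sum_j u_j\alpha_j\in\Z^N$. This gives a bijection
\[
t\sigma\cap\Z^N\;\longleftrightarrow\;\Big\{(y,k_0,\dots,k_n): k_j\in\Z_{\ge0},\ y\in D\big(\sigma,t-d\textstyle\sum_j k_j\big)\Big\}.
\]
The inverse map sends $(y,k)$ to $y+\sum_j dk_j\alpha_j$. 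One must check that the coefficients of $y$ are recovered uniquely. They are, because the level $\ell=t-d\sum k_j$ is fixed and the representation $\sum u_j\alpha_j$ with $\sum u_j=\ell$ is unique.

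Next I group the terms by $a:=\sum_j k_j$. The number of tuples $(k_0,\dots,k_n)\in\Z_{\ge0}^{n+1}$ with sum $a$ is $\binom{a+n}{n}$. The level $\ell=t-da$ must lie in $[0,(n+1)d)$, since otherwise $D(\sigma,\ell)$ is empty. This condition is equivalent to $a\in(\frac td-n-1,\frac td]$. Together with $a\ge0$, this gives \eqref{eq:closed-simplex}. Negative $a$ in that range contribute $\binom{a+n}{n}=0$ for $-n\le a\le -1$. Moreover $a>\frac td-n-1\ge -n-1$ because $t\ge0$, so no spurious terms appear.

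For the open simplex the argument is the same with the roles of the bounds swapped. Now $w_j>0$, and I write $w_j=v_j+db_j$ with $v_j\in(0,d]$, that is, $b_j=\lceil w_j/d\rceil-1\ge 0$. The level is $\ell=t-db\in(0,(n+1)d]$, which gives $b\in[\frac td-n-1,\frac td)$. This yields \eqref{eq:open-simplex}.

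The main obstacle is the uniqueness in the bijection when the $\alpha_j$ are linearly dependent. In that case $y$ alone does not determine the $u_j$; I will handle this by pairing $y$ with its level, using $\operatorname{rank}B=n+1$ as in the discussion after Proposition \ref{step-function}. A secondary point is the boundary bookkeeping for negative $a$ and $b$. For $b$ in the open case, $t>0$ gives $b\ge \frac td-n-1>-n-1$, so any negative $b$ in the range satisfies $-n\le b\le-1$ and contributes $\binom{b+n}{n}=0$.
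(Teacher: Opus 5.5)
Your proposal is correct and follows essentially the same route as the paper: scaled barycentric coordinates $w_j=u_j+dk_j$ with $u_j\in[0,d)$ (resp.\ $(0,d]$), integrality of $d\alpha_j$, and grouping by $a=\sum_j k_j$ with $\binom{a+n}{n}$ tuples per group. Your explicit checks are details the paper leaves implicit: the $u_j$ are recovered uniquely from the pair $(y,\ell)$ via $\operatorname{rank}B=n+1$, and indices $-n\le a\le -1$ in the summation range contribute nothing because $\binom{a+n}{n}=0$ there.
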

\begin{proof}
Every point $\alpha\in t\sigma$ admits a unique representation
\[
\alpha=\sum_{j=0}^n a_j t\alpha_j,\qquad a_j\ge 0,\ \sum_{j=0}^n a_j=1,
\]
and similarly each $\beta\in t\mathring{\sigma}$ has
\[
\beta=\sum_{j=0}^n b_j t\alpha_j,\qquad b_j>0,\ \sum_{j=0}^n b_j=1.
\]
Dividing $a_jt$ and $b_jt$ by $d$ yields unique decompositions $a_jt=u_j+k_jd$ and $b_jt=v_j+l_jd$ with $0\le u_j<d$, $0<v_j\le d$, and $k_j,l_j\in\Z_{\ge0}$. Substituting these decompositions into the above expressions for $\alpha$ and $\beta$ gives
\begin{align}\label{eq:division of coefficients}
\alpha=\sum_{j=0}^n u_j\alpha_j+\sum_{j=0}^n k_j d\alpha_j,\quad \beta=\sum_{j=0}^n v_j\alpha_j+\sum_{j=0}^n l_j d\alpha_j.
\end{align}
Because the vectors $d\alpha_j$ are integral, $\alpha$ (resp. $\beta$) is a lattice point precisely when $\sum u_j\alpha_j$ (resp. $\sum v_j\alpha_j$) lies in $\Z^N$, equivalently, in $D(\sigma;\sum u_j)$ (resp. $\bar D(\sigma;\sum v_j)$).

Set $a=\sum_{j=0}^n k_j$. Then $\sum u_j=t-da$, and the bounds $0\le u_j<d$ imply
\[
a=\frac{t}{d}-\frac{1}{d}\sum_{j=0}^n u_j\in\Bigl(\frac{t}{d}-n-1,\frac{t}{d}\Bigr]\cap\Z.
\]
By the uniqueness of the representation \eqref{eq:division of coefficients}, the lattice points of $t\sigma$ are naturally grouped by $a$ into disjoint sets 
\[
\left\{\sum u_j\alpha_j+\sum k_j d\alpha_j:\sum u_j\alpha_j\in D(\sigma;t-da),\ \sum k_j=a\right\}
\]
as $a$ ranges over all admissible integers.

Fix one such $a$. The number of nonnegative integer tuples $(k_0,\ldots,k_n)$ summing to $a$ equals $\binom{a+n}{n}$, and each such tuple contributes exactly $\#D(\sigma;t-da)$ lattice points $\alpha$ of the form $\sum u_j\alpha_j+\sum k_j d\alpha_j$. By the uniqueness of the representation \eqref{eq:division of coefficients}, lattice points arising from different tuples are distinct, so the contribution of this $a$ to $L(\sigma,t)$ is $\#D(\sigma;t-da)\binom{a+n}{n}$. Summing these contributions over all admissible $a$ yields \eqref{eq:closed-simplex}.

For interior dilations we instead set $b=\sum_{j=0}^n l_j$. The relation $\sum v_j=t-db$ forces $b\in[\frac{t}{d}-n-1,\frac{t}{d})\cap\Z$, and applying the same counting argument produces \eqref{eq:open-simplex}.
\end{proof}

From this perspective, \eqref{eq:closed-simplex} and \eqref{eq:open-simplex} express the counting functions $\#(t\sigma\cap\Z^N)$ and $\#(t\mathring{\sigma}\cap\Z^N)$ as polynomial expansions in the binomial basis $\binom{a+n}{n}$ and $\binom{b+n}{n}$, each term encoding a determined set. To analyze the coefficient functions that appear in these expansions, it is convenient to adopt the notation used by Stanley~\cite{stanley2011enumerative}.

For every real number $x$, define its fractional parts relative to the floor and ceiling by
\[
\{x\}:=x-\lfloor x\rfloor,  \text{ and }  \langle x\rangle:=x-\lceil x\rceil.
\]
Then $0\le \{x\}<1$ and $0\le \langle x\rangle<1$, with $\{x\}=\langle x\rangle=0$ precisely when $x\in\Z$ and $\{x\}-\langle x\rangle=1$ otherwise. We also have $\{-x\}=-\langle x\rangle$ for all $x$, and both are periodic functions of $x$ with period $1$. 

Recall the definition of the binomial coefficient: for $x,y\in\R$ and $n\in\Z_{\geq 0}$,
\begin{align}
\binom{x+y+n}{n}
&=\frac{1}{n!}(x+y+1)(x+y+2)\cdots(x+y+n)\label{Binomial-Coefficient-Definition}\\
&=\frac{1}{n!}\sum_{k=0}^n x^k\,s_{n-k}(y+1,\ldots,y+n),
\label{Binomial-Coefficient-Elemetary-Symmetry-Poly}
\end{align}
where $s_j$ is the $j$th elementary symmetric polynomial in $n$ variables, that is,
\[
s_j(y_1,\ldots,y_n)=\sum_{1\le i_1<\cdots<i_j\le n}y_{i_1}\cdots y_{i_j}, \quad j=0,1,\ldots,n,
\]
We now use this identity to extract the coefficient functions promised in Theorem~\ref{main1}.

\begin{proof}[Proof of Theorem~\ref{main1}]
Let $\sigma$ be an $n$-simplex with denominator $d$. For any real $t$,
\[
\Bigl(\tfrac{t}{d}-n-1,\tfrac{t}{d}\Bigr]\cap\Z=
\Bigl\{\bigl\lfloor\tfrac{t}{d}\bigr\rfloor,\bigl\lfloor\tfrac{t}{d}\bigr\rfloor-1,\ldots,
\bigl\lfloor\tfrac{t}{d}\bigr\rfloor-n\Bigr\}.
\]
Setting $a=\lfloor\tfrac{t}{d}\rfloor-j$ with $0\le j\le n$ and using $\lfloor x\rfloor=x-\{x\}$ yields
\[
a=\frac{t}{d}-j-\Bigl\{\frac{t}{d}\Bigr\},\qquad t-da=dj+d\Bigl\{\frac{t}{d}\Bigr\}.
\]
Substituting into \eqref{eq:closed-simplex} gives
\begin{equation}\label{Lsigma}
\#(t\sigma\cap\Z^N)=\sum_{j=0}^n \#D\Bigl(\sigma, dj+d\Bigl\{\frac{t}{d}\Bigr\}\Bigr)\binom{\frac{t}{d}-j-\{\frac{t}{d}\}+n}{n}.
\end{equation}
Applying \eqref{Binomial-Coefficient-Elemetary-Symmetry-Poly} with $x=\tfrac{t}{d}$ and $y=-j-\{\tfrac{t}{d}\}$ gives
\[
\binom{\frac{t}{d}-j-\{\frac{t}{d}\}+n}{n}
=\frac{1}{n!}\sum_{k=0}^n\Bigl(\frac{t}{d}\Bigr)^k
s_{n-k}\Bigl(1-j-\Bigl\{\frac{t}{d}\Bigr\},\ldots,n-j-\Bigl\{\frac{t}{d}\Bigr\}\Bigr).
\]
Inserting this expansion into \eqref{Lsigma} and collecting the coefficients of each power of $t$ gives $\#(t\sigma\cap\Z^N)=\sum_{k=0}^n c_k(\sigma,t)t^k$ with
\begin{equation}\label{coefficient-of-closed-simplex}
c_k(\sigma,t)=\frac{1}{n!d^k}\sum_{j=0}^n \#D\Bigl(\sigma, dj+d\Bigl\{\frac{t}{d}\Bigr\}\Bigr)
s_{n-k}\Bigl(1-j-\Bigl\{\frac{t}{d}\Bigr\},\ldots,n-j-\Bigl\{\frac{t}{d}\Bigr\}\Bigr).
\end{equation}
Since the fractional part $\{\tfrac{t}{d}\}$ has period $d$ and is defined for all real $t$, each coefficient $c_k(\sigma,t)$ inherits the same period and is well-defined on $\R$.

The open case is analogous. The integers in $\bigl[\tfrac{t}{d}-n-1,\tfrac{t}{d}\bigr)$ are
\[
\Bigl\{\bigl\lceil\tfrac{t}{d}\bigr\rceil-1,\ldots,\bigl\lceil\tfrac{t}{d}\bigr\rceil-(n+1)\Bigr\}.
\]
With $b=\lceil\tfrac{t}{d}\rceil-j$ ($1\le j\le n+1$) and $\lceil x\rceil=x-\langle x\rangle$ we obtain
\[
b=\frac{t}{d}-j-\Bigl\langle\frac{t}{d}\Bigr\rangle,\qquad t-db=d\Bigl(j+\Bigl\langle\frac{t}{d}\Bigr\rangle\Bigr).
\]
Using these expressions in \eqref{eq:open-simplex} we obtain
\begin{equation}\label{Lsigmaopen}
\#(t\mathring{\sigma}\cap\Z^N)=\sum_{j=1}^{n+1}\#\bar D\Bigl(\sigma,dj+d\Bigl\langle\frac{t}{d}\Bigr\rangle\Bigr)
\binom{\frac{t}{d}-j-\langle\frac{t}{d}\rangle+n}{n},
\end{equation}
Applying \eqref{Binomial-Coefficient-Elemetary-Symmetry-Poly} again gives $\#(t\mathring{\sigma}\cap\Z^N)=\sum_{k=0}^n c_k(\mathring{\sigma},t)t^k$ with
\begin{equation}\label{coefficient-of-open-simplex}
c_k(\mathring{\sigma},t)=\frac{1}{n!d^k}\sum_{j=1}^{n+1}\#\bar D\Bigl(\sigma,dj+d\Bigl\langle\frac{t}{d}\Bigr\rangle\Bigr)
s_{n-k}\Bigl(1-j-\Bigl\langle\frac{t}{d}\Bigr\rangle,\ldots,n-j-\Bigl\langle\frac{t}{d}\Bigr\rangle\Bigr).
\end{equation}
Since $\langle\tfrac{t}{d}\rangle$ is defined for all real $t$ and has period $d$, each coefficient $c_k(\mathring{\sigma},t)$ is likewise well defined on $\R$ and $d$-periodic.

Now consider a general rational polytope $P$ of denominator $d$. Decompose $P$ into finitely many pairwise disjoint open simplices $\mathring{\sigma}_1,\ldots,\mathring{\sigma}_s$, each with denominator $d$. Then
\begin{align*}
\#(tP\cap\Z^N)
&=\sum_{i=1}^s \#(t\mathring{\sigma}_i\cap\Z^N)\\
&=\sum_{i=1}^s\sum_{k=0}^{\dim\mathring{\sigma}_i} c_k(\mathring{\sigma}_i,t)t^k\\
&=\sum_{k=0}^n\Bigl(\sum_{\dim\mathring{\sigma}_i\ge k} c_k(\mathring{\sigma}_i,t)\Bigr)t^k.
\end{align*}
Define $c_k(P,t)=\sum_{\dim\mathring{\sigma}_i\ge k} c_k(\mathring{\sigma}_i,t)$. Then each $c_k(P,t)$ inherits period $d$, and hence $L(P,t)=\sum_k c_k(P,t)t^k$ is a real quasi-polynomial on $\R$ whose restriction to $[0,\infty)$ agrees with the counting function $\#(tP\cap\Z^N)$. The same argument applied to $\mathring{P}$ proves the corresponding statement for $L(\mathring{P},t)$.
\end{proof}

The proof above yields concrete formulas for simplex coefficients in terms of determined sets: by periodicity, \eqref{coefficient-of-closed-simplex} and \eqref{coefficient-of-open-simplex} coincide with the coefficient functions $c_k(\sigma,t)$ and $c_k(\mathring{\sigma},t)$ announced in Theorem~\ref{main1} on all of $\R$. These are simply rewritten as \eqref{concrete coefficient of closed simplex} and \eqref{concrete coefficient of open simplex}.

To compute quasi-polynomial expansions for $L(\sigma,t)$ and $L(\mathring{\sigma},t)$, one evaluates $\#D(\sigma,\ell)$ and $\#\bar D(\sigma,\ell)$ via Algorithm~\ref{algorithm}, then substitutes into the coefficient formulas \eqref{coefficient-of-closed-simplex} and \eqref{coefficient-of-open-simplex}, or directly into the binomial formulas \eqref{real close binomial} and \eqref{real open binomial} (see Section~\ref{examples}).

Examining the argument in the proof of Theorem~\ref{main1} in reverse, we see that the same binomial expressions for $L(\sigma,t)$ and $L(\mathring{\sigma},t)$ remain valid for every real $t$, not only for $t\ge 0$ and $t>0$. The key point is that the coefficient functions are already defined on all of $\R$ and satisfy the same periodic formulas throughout.

\begin{prop}\label{binomial formula for real dilations}
Let $\sigma$ be a rational simplex of denominator $d$. For every real $t$,
\begin{align}
L(\sigma,t)&=\sum_{a\in\left(\frac{t}{d}-n-1,\frac{t}{d}\right]\cap\Z}\#D(\sigma;t-da) \binom{a+n}{n},\quad t\in\R,\label{real close binomial}\\
L(\mathring{\sigma},t)&=\sum_{b\in\left[\frac{t}{d}-n-1,\frac{t}{d}\right)\cap\Z} \#\bar D(\sigma;t-db) \binom{b+n}{n},\quad t\in\R.\label{real open binomial}
\end{align}
\end{prop}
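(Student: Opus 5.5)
The plan is to run the proof of Theorem~\ref{main1} backwards. For every real $t$, $L(\sigma,t)$ and $L(\mathring{\sigma},t)$ are \emph{defined} as the quasi-polynomials $\sum_k c_k(\sigma,t)t^k$ and $\sum_k c_k(\mathring{\sigma},t)t^k$. The coefficients $c_k$ are given on all of $\R$ by \eqref{coefficient-of-closed-simplex} and \eqref{coefficient-of-open-simplex}; these depend on $t$ only through $\{t/d\}$ and $\langle t/d\rangle$, so they agree with the periodic extension $c_k(\sigma,t+md)$ prescribed in Theorem~\ref{main1}. So I would first record that the formulas \eqref{coefficient-of-closed-simplex} and \eqref{coefficient-of-open-simplex} hold for every real $t$, not just $t\ge0$ or $t>0$.

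Next I fix an arbitrary $t\in\R$ and substitute these coefficients into $\sum_{k=0}^n c_k(\sigma,t)t^k$. Interchanging the finite sums over $j$ and $k$ gives
\[
\sum_{j=0}^n \#D\Bigl(\sigma,dj+d\Bigl\{\tfrac{t}{d}\Bigr\}\Bigr)\frac{1}{n!}\sum_{k=0}^n\Bigl(\tfrac{t}{d}\Bigr)^k s_{n-k}\Bigl(1-j-\Bigl\{\tfrac{t}{d}\Bigr\},\ldots,n-j-\Bigl\{\tfrac{t}{d}\Bigr\}\Bigr).
\]
The inner sum equals $\binom{t/d-j-\{t/d\}+n}{n}$ by \eqref{Binomial-Coefficient-Elemetary-Symmetry-Poly}, which is a polynomial identity in $x,y$ and therefore holds for all real arguments. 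This recovers \eqref{Lsigma} for every real $t$.

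Finally I reindex. Set $a=\lfloor t/d\rfloor-j=t/d-j-\{t/d\}$ for $0\le j\le n$, so that $t-da=dj+d\{t/d\}$. As $j$ runs over $0,\dots,n$, $a$ runs exactly over $(t/d-n-1,t/d]\cap\Z$, and this description uses only the floor function, so it is valid for negative $t$ too. This gives \eqref{real close binomial}. The open case is the same with $b=\lceil t/d\rceil-j$ for $1\le j\le n+1$. Here $t-db=d(j+\langle t/d\rangle)$, and the indices $b$ fill $[t/d-n-1,t/d)\cap\Z$, which gives \eqref{real open binomial} from \eqref{coefficient-of-open-simplex}.

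I do not expect a real obstacle here, since every step is an algebraic identity valid on $\R$. The one point to state carefully is that for $t<0$ the left-hand sides are not lattice-point counts; they are the quasi-polynomial values from Theorem~\ref{main1}, and one must check that the periodic extension used there coincides with the formula in $\{t/d\}$ and $\langle t/d\rangle$. Note also that $\binom{a+n}{n}$ with negative $a$ must be read via \eqref{Binomial-Coefficient-Definition} as the polynomial $\frac{1}{n!}(a+1)\cdots(a+n)$, not as a count.
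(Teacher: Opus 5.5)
Your proposal is correct and follows the paper's proof essentially step for step. You substitute the coefficient formulas \eqref{coefficient-of-closed-simplex} and \eqref{coefficient-of-open-simplex}, which are valid on all of $\R$ by $d$-periodicity, into $\sum_k c_k t^k$, swap the finite sums, collapse the inner sum via \eqref{Binomial-Coefficient-Elemetary-Symmetry-Poly}, and reindex by $a=\lfloor t/d\rfloor-j$ (resp.\ $b=\lceil t/d\rceil-j$). Your explicit check that these formulas in $\{t/d\}$ and $\langle t/d\rangle$ agree with the periodic extension from Theorem~\ref{main1} is something the paper only asserts in passing, and you handle it correctly.
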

\begin{proof}
For any real $t$, substituting \eqref{coefficient-of-closed-simplex} into $L(\sigma,t)=\sum_{k=0}^n c_k(\sigma,t)t^k$ gives
\[
L(\sigma,t)=\sum_{k=0}^n\frac{1}{n!d^k}\sum_{j=0}^n \#D\Bigl(\sigma, dj+d\Bigl\{\frac{t}{d}\Bigr\}\Bigr)
s_{n-k}\Bigl(1-j-\Bigl\{\frac{t}{d}\Bigr\},\ldots,n-j-\Bigl\{\frac{t}{d}\Bigr\}\Bigr)t^k.
\]
Reordering the summation and applying \eqref{Binomial-Coefficient-Elemetary-Symmetry-Poly} with $x=\tfrac{t}{d}$ and $y=-j-\{\tfrac{t}{d}\}$, we obtain
\[
L(\sigma,t)=\sum_{j=0}^n \#D\Bigl(\sigma, dj+d\Bigl\{\frac{t}{d}\Bigr\}\Bigr)\binom{\frac{t}{d}-j-\{\frac{t}{d}\}+n}{n}
=\sum_{a\in\left(\frac{t}{d}-n-1,\frac{t}{d}\right]\cap\Z}\#D(\sigma;t-da) \binom{a+n}{n}.
\]
Starting instead from \eqref{coefficient-of-open-simplex}, the same argument yields the corresponding formula for $L(\mathring{\sigma},t)$. Therefore both binomial formulas hold for all real dilations.
\end{proof}

Here $\binom{x}{n}$ denotes the generalized binomial coefficient defined by \eqref{Binomial-Coefficient-Definition} for real $x$ and $n\in\Z_{\geq0}$. Moreover, for every real $t$ and every admissible
\[
a\in\left(\frac{t}{d}-n-1,\frac{t}{d}\right]\cap\Z,\qquad b\in\left[\frac{t}{d}-n-1,\frac{t}{d}\right)\cap\Z,
\]
we have $t-da\in[0,(n+1)d)$ and $t-db\in(0,(n+1)d]$. Hence the step functions $\#D(\sigma;t-da)$ and $\#\bar D(\sigma;t-db)$ are well-defined, and the above binomial formulas make sense for all $t\in\R$.

The reciprocity law in Theorem~\ref{reciprocity law for Ehrhart function} now follows from the binomial identity
\begin{align}\label{eq:binomial reciprocity}
\binom{n+x}{n}=(-1)^n\binom{-x-1}{n},
\end{align}
which relates the formulas for $L(\sigma,t)$ and $L(\mathring{\sigma},t)$, as stated next.

\begin{proof}[Proof of Theorem~\ref{reciprocity law for Ehrhart function}]
We first prove the statement for a rational simplex $\sigma$. By \eqref{correspondence} in the proof of Proposition~\ref{step-function}, for every real $t$ we have
\begin{equation*}
L(\sigma^{\circ},-t)=\sum_{b\in\left[-\frac{t}{d}-n-1,\:-\frac{t}{d}\right)\cap{\Z}} \# D(\sigma;(n+1)d-(t-db)) \binom{b+n}{n}.
\end{equation*}
Set $l=-n-1-b$. Then
\[
b\in\left[-\frac{t}{d}-n-1,\:-\frac{t}{d}\right)\cap\Z
\quad\Longleftrightarrow\quad
l\in\left(\frac{t}{d}-n-1,\frac{t}{d}\right]\cap\Z.
\]
Substituting this change of variables into the above expression for $L(\sigma^{\circ},-t)$, we obtain
\[
L(\sigma^{\circ},-t)=\sum_{l\in\left(\frac{t}{d}-n-1,\:\frac{t}{d}\right]\cap{\Z}} \# D(\sigma;-t-dl) \binom{-l-1}{n}.
\]
Applying the reciprocity identity \eqref{eq:binomial reciprocity} for binomial coefficients, we obtain
\[
L(\sigma^{\circ},-t)=(-1)^n\sum_{l\in\left(\frac{t}{d}-n-1,\:\frac{t}{d}\right]\cap{\Z}} \# D(\sigma;-t-dl) \binom{n+l}{n}.
\]
The sum on the right is exactly $(-1)^nL(\sigma,t)$, and hence $L(\sigma^{\circ},-t)=(-1)^nL(\sigma,t)$.
\end{proof}

With the reciprocity law for rational simplices established, the general case follows by the same method used in the proof of the classical Ehrhart reciprocity law for rational polytopes, which similarly relies on the simplex case. The remainder of this section carries out this strategy to prove the reciprocity law for rational polytopes.

\begin{proof}[Proof of Theorem~\ref{reciprocity law for polytope}]
Let $K$ be a simplicial decomposition of $P$ with $\emptyset\notin K$. We begin by applying the simplex reciprocity law to each relatively open simplex in $K$:
\[
L(P,-t)=\sum_{\sigma\in K}L(\mathring{\sigma},-t)=\sum_{\sigma\in K}(-1)^{\dim\sigma}L(\sigma,t).
\]
Expanding each $L(\sigma,t)$ into the sum over the relatively open faces of $\sigma$, we obtain
\begin{align*}
L(P,-t)=\sum_{\sigma\in K}(-1)^{\dim\sigma}\sum_{\tau\leq\sigma}L(\mathring{\tau},t)=\sum_{\tau\in K}L(\mathring{\tau},t)\sum_{\tau\leq\sigma\in K}(-1)^{\dim\sigma}.
\end{align*}
Thus the problem reduces to evaluating, for each face $\tau\in K$, the inner alternating sum over all simplices of $K$ that contain $\tau$. This sum is precisely the Euler characteristic of the star $St(\tau)$ of $\tau$ in $K$, so
\begin{align*}
\sum_{\tau\leq\sigma\in K}(-1)^{\dim\sigma}=
\begin{cases}
(-1)^{\dim P} & \text{if }\tau\nsubseteq\partial P,\\
0 & \text{if }\tau\subseteq\partial P.
\end{cases}
\end{align*}
Indeed, if $\tau$ is not contained in $\partial P$, then $St(\tau)$ is a ball of dimension $\dim P-\dim\tau$, whereas if $\tau\subseteq\partial P$, then $St(\tau)$ is a boundary ball and its Euler characteristic is zero. Substituting these values back into the previous expression yields
\[
L(P,-t)=\sum_{\tau\in K,\tau\nsubseteq\partial P}L(\mathring{\tau},t)(-1)^{\dim P}=(-1)^{\dim P}L(\mathring{P},t),
\]
which completes the proof.

\end{proof}

Theorem~\ref{main1} shows that the counting functions for rational simplices and rational polytopes are real quasi-polynomials, thereby extending the classical theory of integer dilations of integral polytopes. Moreover, Theorems \ref{reciprocity law for Ehrhart function} and \ref{reciprocity law for polytope} confirm that the reciprocity law extends to this broader context. We now turn to a more refined study of the coefficient functions.

\section{The coefficients of real Ehrhart quasi-polynomial}

In this section we study the coefficient functions of the real Ehrhart quasi-polynomials. We begin by rewriting the simplex coefficient formulas in a form convenient for proving reciprocity and piecewise polynomiality. Define the multivarible functions $\mathbf{s}^\sigma_k,\bar{\mathbf{s}}^\sigma_k:\R^{n+1}\to\R$ and varible functions $x_l(i,t),y_l(j,t):\R\to\R$
by \eqref{eq:multivariable functions for closed simplex}, \eqref{eq:multivariable functions for open simplex}, \eqref{eq:variable functions for closed simplex} and \eqref{eq:variable functions for open simplex}, respectively.
In terms of this notation, the formulas \eqref{coefficient-of-closed-simplex} and \eqref{coefficient-of-open-simplex} for the coefficient functions $c_k(\sigma,t)$ and $c_k(\mathring{\sigma},t)$ take the form
\begin{align}
c_k(\sigma,t)&=\frac{1}{n!d^k}\sum_{i=0}^{n}\mathbf{s}_{n-k}(\sigma;x_0(i,t),\dots,x_n(i,t)),\label{coefficient of closed simplex}\\
c_k(\mathring{\sigma},t)&=\frac{1}{n!d^k}\sum_{j=1}^{n+1}\bar{\mathbf{s}}_{n-k}(\sigma;y_0(j,t),\dots,y_n(j,t)).\label{coefficient of open simplex}
\end{align}

These formulas now allow us to establish the reciprocity law for simplex coefficients using symmetric function arguments and the correspondence between the determined sets, as stated next. An alternative proof could be given by applying the reciprocity law to quasi-polynomials and invoking the uniqueness of their coefficients, but this would first require establishing that uniqueness itself.

\begin{proof}[Proof of Proposition \ref{reciprocity law for coefficients}]
Since $\left\{-\frac{t}{d}\right\}=-\left\langle\frac{t}{d}\right\rangle$, we have for $0\leq l_1,l_2,i\leq n$ and $1\leq j\leq n+1$:
\[
x_{l_1}(i,-t)+y_{l_2}(j,t)=l_1+l_2-i-j.
\]
In particular, choosing $j=n+1-i$ and $l_2=n+1-l_1$ gives $x_{l_1}(i,-t)=-y_{n+1-l_1}(n+1-i,t)$. Using the symmetry and homogeneity of elementary symmetric polynomials, we then obtain
\begin{align*}
s_{n-k}(x_1(i,-t),\dots,x_n(i,-t))=(-1)^{n-k}s_{n-k}(y_1(j,t),\dots,y_n(j,t)).
\end{align*}
Observe that $-x_0(i,-t)-y_0(j,t)=n+1$. Combining with \eqref{correspondence} (from Proposition~\ref{step-function}) gives
\[
\#D\left(\sigma,-dx_0(i,-t)\right)=\#\bar{D}\left(\sigma,-dy_0(j,t)\right).
\]
Substituting both results into \eqref{eq:multivariable functions for closed simplex} and \eqref{eq:multivariable functions for open simplex} yields
\begin{align*}
\mathbf{s}_{n-k}(\sigma;x_0(i,-t),\dots,x_n(i,-t))=(-1)^{n-k}\bar{\mathbf{s}}_{n-k}(\sigma;y_0(j,t),\dots,y_n(j,t)).
\end{align*}
The multivariable forms in \eqref{coefficient of closed simplex} and \eqref{coefficient of open simplex} gives $c_k(\sigma,-t)=(-1)^{n-k}c_k(\mathring{\sigma},t)$, as claimed.
\end{proof}

The polytope case follows by applying the same argument as in the proof of Theorem~\ref{reciprocity law for polytope} but with $L(P,t)$ and $L(\mathring{P},t)$ replaced by $c_k(P,t)$ and $c_k(\mathring{P},t)$, respectively. Thus, for every rational polytope $P$ and every real $t$,
\[
c_k(P,-t)=(-1)^{\dim P-k}c_k(\mathring{P},t).
\]

To establish piecewise polynomiality of the coefficient functions, we first derive summation formulas for the determined sets. These formulas are essential for determining the degree and leading coefficients of each polynomial piece.

\begin{prop}\label{prop:volume counting}
Let $\sigma$ be a rational $n$-simplex of denominator $d$ with linearly dependent vertices. Then for all $t\in\R$,
\begin{align}
\sum_{i=0}^{n}\#D(\sigma,-dx_0(i,t))&=n!d^n\cdot\vol(\sigma),\label{eq:closed volume}\\
\sum_{j=1}^{n+1}\#\bar{D}(\sigma,-dy_0(j,t))&=n!d^n\cdot\vol(\sigma).\label{eq:open volume}
\end{align}
\end{prop}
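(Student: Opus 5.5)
The plan is to compare two asymptotic expansions of $L(\sigma,t)$ along the arithmetic progression $t=(m+s)d$ with $m\to\infty$. Here $s=\{t/d\}\in[0,1)$ is held fixed.

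First, since $-dx_0(i,t)=d\bigl(i+\{\tfrac{t}{d}\}\bigr)$, the left side of \eqref{eq:closed volume} depends only on $s=\{t/d\}$. Fix $s$ and set $t_m=(m+s)d$ for integers $m\ge n$. Then $\{t_m/d\}=s$ for all $m$. By \eqref{Lsigma} (equivalently Proposition~\ref{Prop:Chen}),
\[
L(\sigma,t_m)=\sum_{i=0}^{n}\#D\bigl(\sigma,d(i+s)\bigr)\binom{m-i+n}{n}.
\]
The weights $\#D(\sigma,d(i+s))$ do not depend on $m$, and each $\binom{m-i+n}{n}=\frac{m^n}{n!}+O(m^{n-1})$. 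Hence
\[
L(\sigma,t_m)=\frac{m^n}{n!}\sum_{i=0}^n\#D\bigl(\sigma,-dx_0(i,t)\bigr)+O(m^{n-1}).
\]

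Second, I would compute the same leading term geometrically, and this is where linear dependence enters. If $\alpha_0,\dots,\alpha_n$ are linearly dependent, then their linear span $V$ is $n$-dimensional and equals $\aff\sigma$, so $0\in\aff\sigma$. Consequently every dilate $t\sigma$ lies in the fixed $n$-dimensional rational subspace $V$. Let $\Lambda=\Z^N\cap V$, a lattice of rank $n$ in $V$ (rank $n$ because $V$ is spanned by the rational vectors $d\alpha_j$). I take $\vol$ to be the volume on $V$ normalized so that a fundamental domain of $\Lambda$ has volume $1$; this is the paper's convention for $\vol(\sigma)$. The standard Riemann-sum/Jordan-measurability argument for the compact convex set $\sigma\subset V$ then gives
\[
\#(t\sigma\cap\Lambda)=\vol(\sigma)\,t^n+O(t^{n-1})\quad (t\to\infty).
\]
Concretely, one covers $t\sigma$ by translates of a fundamental cell and notes that the cells meeting $\partial(t\sigma)$ number $O(t^{n-1})$. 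Along $t_m$ we have $t_m^n=d^nm^n+O(m^{n-1})$, so $L(\sigma,t_m)=\vol(\sigma)d^nm^n+O(m^{n-1})$. Comparing the coefficients of $m^n$ in the two expansions yields \eqref{eq:closed volume}.

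For \eqref{eq:open volume} I would run the identical argument using \eqref{Lsigmaopen}. Here $-dy_0(j,t)=d\bigl(j+\langle t/d\rangle\bigr)$, and the relative interior satisfies the same asymptotic $\vol(\sigma)t^n+O(t^{n-1})$, since it differs from $t\sigma$ by the lattice points on $t\,\partial\sigma$, which number $O(t^{n-1})$. Alternatively, one can deduce it from \eqref{eq:closed volume} via the bijection \eqref{correspondence}. Indeed, with $j=n+1-i$, the levels satisfy $d(i+\{t'/d\})+d(j+\langle t/d\rangle)=(n+1)d$ when $t'=-t$. Since the closed identity holds for every $t'$, summing over $i$ gives the open one.

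The main obstacle is the careful justification of the lattice-point asymptotic inside $V$ with the correct volume normalization. Its precise role is to make clear why the hypothesis is needed: without linear dependence, $\aff(t\sigma)$ moves with $t$ and contains lattice points only sporadically, so no such asymptotic holds. Everything else is bookkeeping with the formulas already derived in the proof of Theorem~\ref{main1}.
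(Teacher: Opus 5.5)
Your proof is correct, but it takes a different route from the paper's.

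\textbf{The paper's route.} The paper argues exactly and pointwise. For $t\in[0,d)$ it writes $D(\sigma,t+di)$, after the integral translation by $di\alpha_0$, as the lattice points of a slice $t\alpha_0+R_i$ of the half-open parallelepiped $t\alpha_0+R$, where $R=\{\sum_{j\ge 1}u_j(\alpha_j-\alpha_0):u_j\in[0,d)\}$. The slices for $i=0,\dots,n$ tile $t\alpha_0+R$. Linear dependence is used only to place $t\alpha_0$ in the span $V$ of the $\alpha_j-\alpha_0$. Then $t\alpha_0+R$ is still a fundamental domain of $\Lambda=\{d\sum_j m_j(\alpha_j-\alpha_0):m_j\in\Z\}$ inside $V$. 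Hence it contains exactly $[\Z^N\cap V:\Lambda]=n!d^n\vol(\sigma)$ lattice points. The open case needs a second, analogous bijection.

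\textbf{Your route.} You identify $\sum_i\#D(\sigma,d(i+s))$ as $n!$ times the coefficient of $m^n$ in $L(\sigma,(m+s)d)$, using \eqref{Lsigma}. You then match it with the standard asymptotic $\vol(\sigma)t^n+O(t^{n-1})$ for dilates lying in the fixed rational subspace $V$.

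\textbf{What each buys.} The paper's argument is finite and structural: it shows the determined sets at levels $t+di$ literally partition the lattice points of one fundamental cell. Yours is softer but shorter. It reuses only what was proved for Theorem~\ref{main1} plus a standard lattice-point asymptotic. It also makes transparent that the proposition just says the leading coefficient of $L(\sigma,t)$ is $\vol(\sigma)$ on every residue class mod $d$, which is exactly how the paper later uses it in Proposition~\ref{piecewise}. Finally, you get the open case for free from \eqref{correspondence} instead of a second bijection.

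Both arguments rely on the lattice-normalized volume on $V$. You state this explicitly; the paper uses it only implicitly, when it equates $\#(R\cap\Z^N)$ with $\vol(R)$.
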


\begin{proof}
The key observation is that, after integral translation, the determined sets sum to a fundamental domain of the simplex lattice.

\textit{Closed simplex case:} Since $-x_0(i,t)=i+\{\frac{t}{d}\}$ is periodic with period $d$, it suffices to verify \eqref{eq:closed volume} for $t\in[0,d)$, where $-dx_0(i,t)=t+di$. By separating the $\alpha_0$ coordinate in \eqref{DL}, we obtain
\[
D(\sigma,t+di)=di\alpha_0+t\alpha_0+R_i,
\]
where $R_i=\{\sum_{j=1}^n u_j(\alpha_j-\alpha_0): u_j\in[0,d),\ t+di-d<\sum_{j=1}^n u_j\leq t+di\}$. 

Since $di\alpha_0\in\Z^N$ and $R_i$ are disjoint for $0\le i\le n$, we obtain
\[
\varphi:\sum_{i=0}^n\#D(\sigma,t+di)=\sum_{i=0}^n\#\left((t\alpha_0+R_i)\cap\Z^N\right)=\#\left((t\alpha_0+R)\cap\Z^N\right),
\]
where $R:=\bigsqcup_{i=0}^n R_i=\{\sum_{j=1}^n u_j(\alpha_j-\alpha_0): u_j\in[0,d)\}$ is a fundamental region for the lattice $\Lambda=\{d\sum_{j=1}^n m_j(\alpha_j-\alpha_0):m_j\in\Z\}$. By definition of denominator, $\Lambda$ is a sublattice of $\Z^N$ with index $\vol(R)$. Thus $\#(R\cap\Z^N)=\vol(R)$, and by the volume formula for simplices, $\vol(R)=n!d^n\vol(\sigma)$, giving $\#(R\cap\Z^N)=n!d^n\vol(\sigma)$.

Since the vertices $\alpha_0,\alpha_1,\dots,\alpha_n$ are affinely independent and (by assumption) linearly dependent, we have $\alpha_0=\sum_{j=1}^n c_j(\alpha_j-\alpha_0)$ for some $c_j\in\R$. Using this relation, we construct a map
\[
\sum_{j=1}^n u_j d(\alpha_j-\alpha_0)\mapsto t\alpha_0+\sum_{j=1}^n \{u_j-tc_j\} d(\alpha_j-\alpha_0).
\]
It is straightforward to verify that this map induces a lattice-point bijection between $R$ and $t\alpha_0+R$. Thus,
\[
\sum_{i=0}^n\#D(\sigma,t+di)=\#(R\cap\Z^N)=n!d^n\vol(\sigma).
\]

\textit{Open simplex case:} For $t\in(0,d]$, we have $-dy_0(j,t)=t+dj-d$. As before, we can rewrite $\bar D(\sigma,t+dj-d)$ as $t\alpha_0+(dj-d)\alpha_0+\bar R_j$, where
\[
\bar R_j:=\{\sum_{l=1}^n u_l(\alpha_l-\alpha_0): u_l\in[0,d),\ t+dj-2d<\sum_{l=1}^n u_l\leq t+dj-d\}.
\]
Hence $\sum_{j=1}^{n+1}\#\bar{D}(\sigma,t+dj-d)=\#\left((t\alpha_0+\bigsqcup_{j=1}^{n+1}\bar R_j)\cap\Z^N\right)$. Analogously, the map
\[
\sum_{l=1}^n u_l d(\alpha_l-\alpha_0)\mapsto t\alpha_0+\sum_{l=1}^n (1-\{u_l-tc_l\}) d(\alpha_l-\alpha_0).
\]
induces a lattice-point bijection between $R$ and $t\alpha_0+\bigsqcup_{j=1}^{n+1}\bar R_j$. Thus,
\[
\sum_{j=1}^{n+1}\#\bar{D}(\sigma,t+dj-d)=\#(R\cap\Z^N)=n!d^n\vol(\sigma).
\]
\end{proof}

With these preparations, we now turn to the proof of piecewise polynomiality. By Proposition \ref{step-function}, $\#D(\sigma,\ell)$ and $\#\bar D(\sigma,\ell)$ are piecewise constant functions from $[0,(n+1)d)$ to $\Z_{\geq 0}$. Consequently, the coefficient functions $c_k(\sigma,t)$ and $c_k(\mathring{\sigma},t)$ inherit this piecewise structure and are therefore piecewise polynomials. The details follow.

\begin{proof}[Proof of Proposition \ref{piecewise}]
We establish piecewise polynomiality in stages: first for a relatively open simplex, then for general polytopes.

\textit{Step 1: Open simplex.} By periodicity, it suffices to prove that $c_k(\mathring{\sigma},t)$ is piecewise polynomial for $t\in(0,d]$. The formula \eqref{coefficient-of-open-simplex} expresses the coefficient as a weighted sum of elementary symmetric polynomials. Since each $\#\bar{D}(\sigma,t+dj-d)$ is a step function with finitely many jump points, we can partition $(0,d]$ into intervals $(r_i,r_{i+1})$ where each $\#\bar{D}(\sigma,t+dj-d)=C_{ij}$ is constant. On each interval,
\[
c_k(\mathring{\sigma},t)=\frac{1}{n!d^k}\sum_{j=1}^{n+1}C_{ij}\cdot s_{n-k}(y_1(j,t),\dots,y_n(j,t))
\]
is a polynomial of degree at most $n-k$ in $t$ (since $y_l(j,t)=l-j+1-\frac{t}{d}$ is linear in $t$).

For the leading term: $s_{n-k}(y_1(j,t),\dots,y_n(j,t))$ has leading coefficient $\binom{n}{n-k}(-\frac{1}{d})^{n-k}$ in $t^{n-k}$. Thus the coefficient of $t^{n-k}$ in $c_k(\mathring{\sigma},t)$ is
\[
\frac{(-1)^{n-k}}{n!d^n}\binom{n}{k}\sum_{j=1}^{n+1}C_{ij}.
\]

If $\sigma$ has linearly dependent vertices, Proposition~\ref{prop:volume counting} gives $\sum_{j=1}^{n+1}C_{ij}=n!d^n\vol(\sigma)$, so the leading coefficient is
\[
(-1)^{n-k}\binom{n}{k}\vol(\sigma)\neq 0.
\]
If $\sigma$ has linearly independent vertices, then $\#\bar{D}(\sigma,\ell)$ is zero on each interval by Algorithm~\ref{algorithm}, so $c_k(\mathring{\sigma},t)$ is zero on each interval and hence almost zero.

\textit{Step 2: General polytope.} For a polytope $P$ with simplicial decomposition into relatively open simplices $\mathring{\sigma}_1,\ldots,\mathring{\sigma}_s$, we have
\[
c_k(P,t)=\sum_{\dim\mathring{\sigma}_p\geq k}c_k(\mathring{\sigma}_p,t)
\]
is a finite sum of piecewise polynomials, hence piecewise polynomial. 

If $P$ has linearly independent vertices, every $\sigma_p$ has this property, so $c_k(P,t)$ is almost zero. If $P$ has linearly dependent vertices, then all $n$-dimensional simplices in the decomposition have this property. The leading term $t^{n-k}$ comes from these $n$-dimensional pieces:
\[
c_k(P,t)=\sum_{\dim\sigma_p=n}(-1)^{n-k}\binom{n}{k}\vol(\sigma_p)\cdot t^{n-k}+\text{lower order terms}.
\]
Hence, $c_k(P,t)$ has degree exactly $n-k$ with leading coefficient $(-1)^{n-k}\binom{n}{k}\vol(P)$.
\end{proof}

For a polytope $P$ with a simplicial decomposition into relatively open simplices $\mathring{\sigma}_1,\ldots,\mathring{\sigma}_s$, the pieces are given by the intervals
\begin{align}\label{intervals}
(r_0+qd,r_1+qd),\ (r_1+qd,r_2+qd),\ \ldots,\ (r_{m-1}+qd,r_m+qd),\quad q\in\Z,
\end{align}
where $0=r_0<r_1<\cdots<r_m=d$ denote the jump points of $\#\bar{D}(\sigma_p,t+dj-d)$ for $t\in(0,d]$.

We now establish the derivative relation between the coefficients; this relation is already evident from the leading terms. It follows from the symmetric function structure of the coefficient formulas and the linearity of differentiation. The details follow.

\begin{proof}[Proof of Proposition \ref{derivative}]
\textit{Simplex case.}
Since $c_k(\sigma,t)$ and $c_k(\mathring{\sigma},t)$ are periodic with period $d$, it suffices to prove the conclusion for $t\in[0,d)$. By the reciprocity law of coefficients, if the formula holds for $c_k(\sigma,t)$, then
\[
\frac{d}{dt}c_k(\mathring{\sigma},t)=(-1)^{n-k-1}\frac{d}{dt}c_k(\sigma,-t)=(-1)^{n-k-1}c_{k+1}(\sigma,-t)=c_{k+1}(\mathring{\sigma},t).    
\]
Thus, we reduce to proving the formula for $c_k(\sigma,t)$ on $[0,d)$.

For the elementary symmetric polynomial, we have
\[
\frac{\partial s_k(x_1,\dots,x_n)}{\partial x_j}=s_{k-1}(x_1,\dots,\hat{x}_j,\dots,x_n),
\]
where $\hat{x}_j$ denotes the omission of $x_j$. By the chain rule and the fact that $\frac{d}{dt}x_j(i,t)=-\frac{1}{d}$, we have
\begin{align*}
\frac{d}{dt}s_{k}(x_1(i,t),\dots,x_n(i,t))&=-\frac{1}{d}\sum_{j=1}^{n}s_{k-1}(x_1(i,t),\dots,\hat{x}_j(i,t),\dots,x_n(i,t)),
\end{align*}
Since each monomial of $k-1$ distinct variables appears exactly $n-k+1$ times in the above sum, we have
\[
\frac{d}{dt}s_{k}(x_1(i,t),\dots,x_n(i,t))=-\frac{1}{d}(n-k+1)s_{k-1}(x_1(i,t),\dots,x_n(i,t)).
\]
Hence, on each piece of $c_k(\sigma,t)$ with $\#D(\sigma,-dx_0(i,t))$ constant, we have
\begin{align*}
\frac{d}{dt}\mathbf{s}_k(\sigma;x_0(i,t),\dots,x_n(i,t))&=-\frac{1}{d}(n-k+1)\mathbf{s}_{k-1}(\sigma;x_0(i,t),\dots,x_n(i,t)).
\end{align*}
Substituting into formula \eqref{coefficient of closed simplex}, we obtain $\frac{d}{dt}c_k(\sigma,t)=-(k+1)c_{k+1}(\sigma,t)$.

\textit{General polytope case.}
For an arbitrary polytope $P$ with simplicial decomposition into relatively open simplices $\mathring{\sigma}_1,\ldots,\mathring{\sigma}_s$, we have $c_k(P,t)=\sum_{\dim\sigma_p\geq k}c_k(\sigma_p,t)$. By linearity of differentiation,
\[
\frac{d}{dt}c_k(P,t)=\sum_{\dim\sigma_p\geq k}\frac{d}{dt}c_k(\sigma_p,t)=\sum_{\dim\sigma_p\geq k}(-(k+1)c_{k+1}(\sigma_p,t))=-(k+1)c_{k+1}(P,t),
\]
where the last equality uses the convention that $c_{k+1}(\sigma_p,t)=0$ for $\dim\sigma_p<k+1$.
\end{proof}


\section{A detailed example}\label{examples}
We illustrate our methods with a concrete example. Consider the $3$-simplex $\sigma\subseteq\Z^3$ with vertices 
$$\alpha_0=(0,0,0), \quad \alpha_1=(1,1,0), \quad \alpha_2=(1,0,1), \quad \alpha_3=(0,1,1).$$
Define the binomial coefficient functions
\[
B_i(t):=\binom{t-\{t\}+i}{3}, \quad B'_j(t):=\binom{t-\langle t\rangle+j}{3}, \quad i,j\in\Z.
\]

By Proposition \ref{binomial formula for real dilations}, $L(\sigma,t)$ is given by the sum
\[
L(\sigma,t)=\sum_{i=0}^{3}\#D(\sigma,\{t\}+i)B_{3-i}(t).
\]
The key step is determining $\#D(\sigma,\ell)$ for each level $\ell$. We count all $\bm u\in[0,1)^4$ satisfying
\[
A\bm u\in\Z^3 \quad\text{and}\quad \bm c^T\bm u=\ell,
\]
where $A=(\alpha_0,\alpha_1,\alpha_2,\alpha_3)$ and $\bm c=(1,1,1,1)^T$. Defining $B=\begin{pmatrix}A\\\bm c^T\end{pmatrix}$, the conditions become
\[
B\bm u=\binom{\bm b}{\ell} \in \Z^3\times\{\ell\},
\]
where $\bm b:=A\bm u\in[0,2)^3$. Using the inverse matrix
\[
B^{-1}=\begin{pmatrix}-1/2 & -1/2 & -1/2 & 1 \\ 1/2 & 1/2 & -1/2 & 0 \\ 1/2 & -1/2 & 1/2 & 0 \\ -1/2 & 1/2 & 1/2 & 0\end{pmatrix},
\]
we compute $\bm u=B^{-1}\binom{\bm b}{\ell}$ for each integer vector $\bm b\in[0,2)^3$ and determine when $\bm u\in[0,1)^4$:
\begin{itemize}
\item $\bm b=(0,0,0)^T$: $\bm u=(\ell,0,0,0)^T\in[0,1)^4 \Rightarrow \ell\in[0,1)$. 
\item $\bm b=(1,0,0)^T, (0,1,0)^T, (0,0,1)^T$: A coordinate of $\bm u$ is $-1/2$, so no solution.
\item $\bm b=(1,1,0)^T, (1,0,1)^T, (0,1,1)^T$: A coordinate of $\bm u$ is $1$, so no solution.
\item $\bm b=(1,1,1)^T$: $\bm u=(-3/2+\ell,1/2,1/2,1/2)^T\in[0,1)^4 \Rightarrow \ell\in[3/2,5/2)$. 
\end{itemize}

Therefore, $\#D(\sigma,\ell)=\mathbf{1}_{[0,1)}(\ell)+\mathbf{1}_{[3/2,5/2)}(\ell)$, and
\[
L(\sigma,t)=\begin{cases}
B_3(t)+B_1(t) & 0\leq\{t\}<1/2,\\
B_3(t)+B_2(t) & 1/2\leq\{t\}<1.
\end{cases}
\]
More explicitly, write $L(\sigma,t)$ as a quasi-polynomial in $t$ on each interval:
\[
\begin{cases}
\frac{1}{3}t^3 + (1 - \{t\}) t^2 + \left(\{t\}^2 - 2\{t\} + \frac{5}{3}\right) t + \left(   - \frac{1}{3}\{t\}^3+ \{t\}^2- \frac{5}{3}\{t\}+1\right),&0\leq\{t\}<1/2,\\
\frac{1}{3}t^3 + \left(\frac{3}{2} - \{t\}\right)t^2 + \left(\{t\}^2 - 3\{t\} + \frac{13}{6}\right)t + \left( - \frac{1}{3}\{t\}^3+ \frac{3}{2}\{t\}^2- \frac{13}{6}\{t\} +1\right),&1/2\leq\{t\}<1.
\end{cases}
\]
For the interior $\mathring{\sigma}$, by proposition \ref{binomial formula for real dilations}, $L(\mathring{\sigma},t)$ is given by the sum
\[
L(\mathring{\sigma},t)=\sum_{j=1}^{4}\#\bar{D}(\sigma,\langle t\rangle+j)B'_{3-j}(t).
\]
Analogously, we need $\bm u\in(0,1]^4$, which gives $\bm b\in(0,2]^3$. The analysis is parallel:
\begin{itemize}
\item $\bm b=(1,1,1)^T$: $\ell\in(3/2,5/2]$. $\bm b=(2,2,2)^T$: $\ell\in(3,4]$.
\item All other $\bm b\in(0,2]^3\cap\Z^3$: No solution.
\end{itemize}
Therefore, $\#\bar{D}(\sigma,\ell)=1_{(\frac{3}{2},\frac{5}{2}]}+1_{(3,4]}$, and
\[
L(\mathring{\sigma},t)=\begin{cases}
B_{-1}'(t)+B_0'(t) & -1<\langle t\rangle\leq -1/2,\\
B_{-1}'(t)+B_1'(t) & -1/2<\langle t\rangle\leq 0.
\end{cases}
\]
More explicitly, write $L(\mathring{\sigma},t)$ as a quasi-polynomial in $t$ on each interval:  
\[
\begin{cases}
\frac{1}{3}t^3 - \left(\langle t\rangle + \frac{3}{2}\right)t^2 + \left(\langle t\rangle^2 + 3\langle t\rangle + \frac{13}{6}\right)t - \left(\frac{1}{3}\langle t\rangle^3 + \frac{3}{2}\langle t\rangle^2 + \frac{13}{6}\langle t\rangle + 1\right),&-1<\langle t\rangle\leq -1/2,\\
\frac{1}{3}t^3 - \bigl(\langle t\rangle + 1\bigr)t^2 + \left(\langle t\rangle^2 + 2\langle t\rangle + \frac{5}{3}\right)t - \left(\frac{1}{3}\langle t\rangle^3 + \langle t\rangle^2 + \frac{5}{3}\langle t\rangle + 1\right),&-1/2<\langle t\rangle\leq 0.
\end{cases}
\]

The reciprocity law $L(\mathring{\sigma},-t)=(-1)^{\dim\sigma}L(\sigma,t)$ holds for this example. Since $\langle -t\rangle=-\{t\}$, one can verify that the values of $L(\mathring{\sigma},-t)$ and $-L(\sigma,t)$ match on each interval.

\bibliography{Cao-Chen.bib}

\end{document}